    \newcommand{\Rmnum}[1]
    {\expandafter\@slowromancap\romannumeral #1@}
\def\wz{\tilde}
\newtheorem{thm}{Theorem}[section]
\newtheorem{lemma}[thm]{Lemma}
\newcounter{foo}[subsection]
\newtheorem{step}[foo]{Step}
\newtheorem{example}[thm]{Example}
\newtheorem{defin}[thm]{Definition}
\newtheorem{fact}[thm]{Fact}
\theoremstyle{definition}
\newtheorem{remark}[thm]{Remark}
\begin{document}
\begin{CJK*}{GBK}{song}

\renewcommand{\baselinestretch}{1.3}
\title{$P$-polynomial weakly distance-regular digraphs}

\author{Qing Zeng\textsuperscript{a}\quad Yuefeng Yang\textsuperscript{b}\footnote{\scriptsize Corresponding author.}
\quad Kaishun Wang\textsuperscript{a}
\\
\\{\footnotesize  \textsuperscript{a} \em  Laboratory of Mathematics and Complex Systems (MOE),}  \\
{\footnotesize  \em School of Mathematical Sciences, Beijing Normal University, Beijing, 100875, China }\\
{\footnotesize  \textsuperscript{b} \em School of Science, China University of Geosciences, Beijing, 100083, China}}
\date{}
\maketitle
\footnote{\scriptsize\qquad{\em E-mail address:} ~~qingz@mail.bnu.edu.cn ~~(Qing Zeng), ~~yangyf@cugb.edu.cn ~~(Yuefeng Yang), wangks@bnu.edu.cn ~~(Kaishun Wang).}

\begin{abstract}

A weakly distance-regular digraph is $P$-polynomial if its attached scheme is $P$-polynomial. In this paper, we characterize all $P$-polynomial weakly distance-regular digraphs.
\medskip

\noindent {\em AMS classification:} 05E30

\noindent {\em Key words:} $P$-polynomial association scheme; distance-regular digraph; weakly distance-regular digraph
\end{abstract}

\section{Introduction}

All the digraphs considered in this paper are finite, simple, strongly connected and not undirected. For a digraph $\Gamma$, we write $V\Gamma$ and $A\Gamma$ for the vertex set and the arc set of $\Gamma$, respectively. In $\Gamma$, a \emph{path} of length $r$ from $x$ to $y$ is a finite sequence of vertices $(x=w_{0},w_{1},\ldots,w_{r}=y)$ such that $(w_{t-1}, w_{t})\in A(\Gamma)$ for $1\leq t\leq r$. The length of a shortest path from $x$ to $y$ is called
the \emph{distance} from $x$ to $y$ in $\Gamma$, denoted by $\partial_\Gamma(x,y)$. The maximum value of distance function in $\Gamma$ is called the \emph{diameter} of $\Gamma$. We define $\Gamma_{i}$ ($0\leq i\leq d$) to be the set of ordered pairs $(x,y)$ with $\partial_{\Gamma}(x,y)=i$, where $d$ is the diameter of $\Gamma$. A path $(w_{0},w_{1},\ldots,w_{r-1})$ is called a \emph{circuit} of length $r$ when $(w_{r-1},w_0)\in A\Gamma$. The \emph{girth} of $\Gamma$ is the length of a shortest circuit in $\Gamma$. Let $\wz{\partial}_{\Gamma}(x,y):=(\partial_{\Gamma}(x,y),\partial_{\Gamma}(y,x))$ be the \emph{two-way distance} from $x$ to $y$ in $\Gamma$, and $\wz{\partial}(\Gamma)$ the set of all pairs $\wz{\partial}_{\Gamma}(x,y)$. For any $\wz{i}\in\wz{\partial}(\Gamma)$, we define $\Gamma_{\wz{i}}$ to be the set of ordered pairs $(x,y)$ with $\wz{\partial}_{\Gamma}(x,y)=\wz{i}$.

As a natural directed version of distance-regular graphs, Wang and Suzuki \cite{KSW03} introduced the concept of weakly distance-regular digraphs. A digraph $\Gamma$ is said to be \emph{weakly distance-regular} if, for any $\wz{h}$, $\wz{i}$, $\wz{j}\in\wz{\partial}(\Gamma)$ and $\wz{\partial}_{\Gamma}(x,y)=\wz{h}$, the number of $z\in V\Gamma$ such that $\wz{\partial}_{\Gamma}(x,z)=\wz{i}$ and $\wz{\partial}_{\Gamma}(z,y)=\wz{j}$ depends only on $\wz{h}$, $\wz{i}$, $\wz{j}$. In other words, $\Gamma$ is weakly distance-regular if $\mathfrak{X}(\Gamma)=(V\Gamma,\{\Gamma_{\wz{i}}\}_{\wz{i}\in\wz{\partial}(\Gamma)})$ is an association scheme. We call $\mathfrak{X}(\Gamma)$ the \emph{attached scheme} of $\Gamma$. A weakly distance-regular digraph is \emph{$P$-polynomial} if its attached scheme is $P$-polynomial.

Since 2003, some special families of weakly distance-regular digraphs have been classified. See \cite{KSW03,HS04,KSW04,LGG11,YYF16,YYF18} for small valency, \cite{HS04} for thin case, \cite{YYF20} for quasi-thin case, and \cite{YYF} for thick case.


In this paper, we study $P$-polynomial weakly distance-regular digraphs, and obtain the following result.

\begin{thm}\label{main}
Let $\Gamma$ be a weakly distance-regular digraph whose attached scheme $\mathfrak{X}=(X,\{R_i\}_{i=0}^d)$ is $P$-polynomial with respect to the ordering $R_0,R_1,\ldots,R_d$. Then $\Gamma$ is isomorphic to one of the following digraphs:
\begin{itemize}
\item [{\rm(i)}] $(X,R_1)$ or $(X,R_{g-1})$;

\item[{\rm(ii)}] $(X,R_2)$ or $(X,R_{g-2})$, where $k_1>k_g+1$ and $g\in\{6,8\}$;

\item[{\rm(iii)}] $(X,R_1\cup R_2)$ or $(X,R_{g-2}\cup R_{g-1})$, where $2\mid g$;

\item [{\rm (iv)}] $(X,R_1\cup R_g)$ or $(X,R_{g-1}\cup R_g)$, where $d=g$;

\item[{\rm(v)}] $(X,R_2\cup R_g)$ or $(X,R_{g-2}\cup R_g)$, where $k_1>k_g+1$, $d=g$ and $g\in\{6,8\}$;

\item [{\rm(vi)}] $(X,R_1\cup R_2\cup R_g)$ or $(X,R_{g-2}\cup R_{g-1}\cup R_g)$, where $d=g$, $2\mid g$ and $g>4$.
\end{itemize}
Here, $k_i$ is the valency of the relation $R_i$ for $i\in\{1,g\}$ and $g$ is the girth of $(X, R_1)$.
\end{thm}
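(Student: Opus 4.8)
The plan is to reduce the classification to the structure of the single digraph $\Delta:=(X,R_1)$, to pin down $\Delta$ up to isomorphism, and then to determine every weakly distance-regular digraph whose attached scheme is exactly $\mathfrak{X}$. I would begin with elementary consequences of the $P$-polynomial hypothesis. Each $A_i$ is a polynomial in $A_1$; hence if $R_1$ were symmetric, every $A_i$ and so $\mathfrak{X}$ would be symmetric, forcing $\partial_\Gamma(x,y)=\partial_\Gamma(y,x)$ for all $x,y$ and making $\Gamma$ undirected, contrary to hypothesis. So $R_1$ is not symmetric, $A_1$ generates the Bose--Mesner algebra, and $\Delta=(X,R_1)$ is a non-undirected distance-regular digraph in which $R_i$ is precisely the distance-$i$ relation, with $\mathfrak{X}(\Delta)=\mathfrak{X}$ and $d=\operatorname{diam}(\Delta)$. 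Let $g$ be the girth of $\Delta$. From a shortest circuit $x_0\to x_1\to\cdots\to x_{g-1}\to x_0$ one obtains $\partial_\Delta(x_0,x_j)=j$ and $\partial_\Delta(x_j,x_0)=g-j$ for $0\le j\le g-1$; since the transpose of a relation class is again a relation class, this forces $R_i^{\mathsf T}=R_{g-i}$ for $1\le i\le g-1$, so in particular $R_{g-1}=R_1^{\mathsf T}$ and $(X,R_{g-1})$ is the reverse digraph of $\Delta$ --- which already accounts for the pairing of the two digraphs in every item of the theorem. The same identities give $d\ge g-1$, and applying $\mathsf T$ to $R_g$ when $d\ge g$ shows $d=g$ with $R_g=R_g^{\mathsf T}$; thus $d\in\{g-1,g\}$.

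Next I would invoke the structure theory of distance-regular digraphs, applied to $\Delta$: up to isomorphism $\Delta$ is a directed cycle $C_g$ (valency $k_1=1$), a diameter-$2$ digraph arising from a doubly regular tournament ($g=3$), a blow-up of $C_g$ of diameter $d=g$, or one of the exceptional digraphs that occur only for $g\in\{6,8\}$ --- these last being precisely the ones with $k_1>k_g+1$. For each case I would record the intersection numbers, the involution $i\mapsto i'$ determined by $R_i^{\mathsf T}=R_{i'}$, and which $R_i$ are symmetric; this is the data feeding the final step.

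Finally, with $\mathfrak{X}$ now explicit, I would classify all weakly distance-regular $\Gamma$ with $\mathfrak{X}(\Gamma)=\mathfrak{X}$. Every relation $\Gamma_{(1,j)}$ of $\Gamma$ equals some $R_i$, so $A\Gamma=\bigcup_{i\in S}R_i$ for a subset $S\subseteq\{1,\dots,d\}$, and $S$ must satisfy: (a) $(X,\bigcup_{i\in S}R_i)$ is strongly connected, which --- since the $R_i$ are the distance relations of $\Delta$ --- I would recast as a reachability/gcd condition on the residues modulo $g$ generated by $S$; (b) $\Gamma$ is not undirected, i.e.\ $S$ is not closed under $i\mapsto i'$; and (c) the partition of $X\times X$ by equality of two-way $\Gamma$-distance coincides with $\{R_i\}$ and is not strictly coarser. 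The heart of (c) is to compute $\partial_\Gamma$ from $\partial_\Delta$ for each admissible $S$ and each $\Delta$ from the classification, and to check exactly when the second coordinate of the two-way distance re-separates the relations that the shortcuts permitted by (a) have merged along $\Gamma$-geodesics; this is where the side conditions ``$2\mid g$'' of (iii) and (vi), ``$d=g$'' of (iv)--(vi), and ``$g\in\{6,8\}$, $k_1>k_g+1$'' of (ii) and (v) emerge. Discarding the $S$ that disconnect $X$, merge relations irreparably, or yield a digraph isomorphic to one already on the list, the digraphs of (i)--(vi) are exactly those that survive.

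The main obstacle sits in the last two steps. Relying on the structure theory of distance-regular digraphs concentrates the genuine difficulty in the sporadic girth-$6$ and girth-$8$ families, where the digraphs $(X,R_2)$, $(X,R_2\cup R_g)$ and $(X,R_1\cup R_2\cup R_g)$ of items (ii), (v), (vi) first appear as new isomorphism types and where the two-way-distance bookkeeping on the blown-up structure is most delicate. The second, lengthier obstacle is the completeness of the lifting step: verifying that no subset $S$ other than those listed produces a weakly distance-regular digraph with attached scheme $\mathfrak{X}$.
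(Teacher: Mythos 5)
Your opening reduction is sound and matches the paper's starting point: since $\Gamma$ is not undirected, $\mathfrak{X}$ is non-symmetric, $(X,R_1)$ is a distance-regular digraph, and $A\Gamma$ is a union of relations (Fact \ref{sub}), with the reverse digraph handled by Fact \ref{zhuanzhi}. The gap is in your pivot: you propose to ``invoke the structure theory of distance-regular digraphs'' to list $\Delta=(X,R_1)$ up to isomorphism and then ``record the intersection numbers'' case by case. No such classification exists. What is actually available — and all the paper uses — is Damerell's dichotomy $d\in\{g-1,g\}$ with the long type a lexicographic product (Theorem \ref{drdg}) and the Leonard--Nomura bound $g\leq 8$ for non-cycles; the girth-$4$ digraphs are not classified (Enomoto--Mena, Liebler--Mena), and for girths $5$--$8$ even existence is open — the paper itself says it does not know whether cases (ii) and (v) of Theorem \ref{main} are realized. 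So the parameter data your ``two-way-distance bookkeeping'' is predicated on is unavailable, and your parenthetical that the digraphs with $k_1>k_g+1$ are precisely the girth-$6,8$ exceptions is false (girth-$3$ digraphs coming from doubly regular tournaments and the Liebler--Mena girth-$4$ family also satisfy it). Since the case analysis itself is only sketched as ``check when the second coordinate re-separates the relations,'' the heart of the proof is not merely deferred but rests on a missing foundation.

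The paper's substitute for your classification step is parameter-free: from the intersection-number identities of Lemma \ref{jcs} together with the girth bound it proves non-vanishing and inequality lemmas valid for every distance-regular digraph with $k_1>k_g+1$ (Lemmas \ref{p122}--\ref{p322}, e.g.\ $p^1_{i,i}\neq0$ for $i\leq(g+1)/2$ and $p^i_{2,2}\neq0$ for small $i$). These lemmas do double duty. In the elimination direction they force contradictions of the form ``two distinct relations $R_i\neq R_j$ would both equal $\Gamma_{(1,2)}$'' or ``a non-symmetric $R_i$ would be a symmetric relation of the attached scheme'' (Lemmas \ref{arc}, \ref{girth}, \ref{R3R4}), which is how the restrictions $2\mid g$, $g\in\{6,8\}$, $d=g$ actually arise. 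In the converse direction — which you fold into the same sieve but which needs its own argument — they are what allows one to compute the two-way distances of each listed digraph and verify its attached scheme really is $\mathfrak{X}$ (Lemma \ref{sur}), with Munemasa's theorem and the Wang--Suzuki lexicographic-product criteria (Lemma \ref{short and long}) reducing the long type $d=g$ to the short type. To repair your outline you would need to replace the appeal to a classification of $\Delta$ by such scheme-theoretic lemmas and then carry out the finite case analysis for $g\in\{6,8\}$ explicitly.
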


The examples of non-symmetric $P$-polynomial association schemes are very few, we do not know whether all the cases mentioned above are actually realized. One can verify the cases (i) and (iv) are realized. The case (iii) is realized when $(X, R_1)$ is one of  the examples in  \cite{LM}. The case (vi) is realized when $(X, R_1)$ is a lexicographic product of a directed cycle by a complete graph. We do not know at present whether the case (ii) or the case (v) is actually realized.


The remaining of this paper is organized as follows. In Section 2, we provide the required concepts and notations about association schemes. In Section 3, we recall the definition of distance-regular digraphs, and prove some results which are used frequently in this paper. In Section 4, we give a proof of Theorem \ref{main}.

\section{Association schemes}

In this section, we present some concepts and notations of association schemes.

A \emph{$d$-class association scheme} $\mathfrak{X}$ is a pair $(X,\{R_{i}\}_{i=0}^{d})$, where $X$ is a finite set, and each $R_{i}$ is a
nonempty subset of $X\times X$ satisfying the following axioms (see \cite{EB84,PHZ96,PHZ05} for a background of the theory of association schemes):
\begin{itemize}
\item [{\rm(i)}] $R_{0}=\{(x,x)\mid x\in X\}$ is the diagonal relation;

\item [{\rm(ii)}] $X\times X=R_{0}\cup R_{1}\cup\cdots\cup R_{d}$, $R_{i}\cap R_{j}=\emptyset~(i\neq j)$;

\item [{\rm(iii)}] for each $i$, $R_{i}^{\rm T}=R_{i^{*}}$ for some $0\leq i^{*}\leq d$, where $R_{i}^{\rm T}=\{(y,x)\mid(x,y)\in R_{i}\}$;

\item [{\rm(iv)}] for all $i,j,h$, the cardinality of the set $$P_{i,j}(x,y):=R_{i}(x)\cap R_{j^{*}}(y)$$ is constant whenever $(x,y)\in R_{h}$, where $R(x)=\{y\mid (x,y)\in R\}$ for $R\subseteq X\times X$ and $x\in X$. This constant is denoted by $p_{i,j}^{h}$.
\end{itemize}
The integers $p_{i,j}^{h}$ are called the \emph{intersection numbers} of $\mathfrak{X}$. We say that $\mathfrak{X}$ is \emph{commutative} if $p_{i,j}^{h}=p_{j,i}^{h}$ for all $0\leq i,j,h\leq d$. The subsets $R_{i}$ are called the \emph{relations} of $\mathfrak{X}$. For each $i$, the integer $k_{i}$ $(=p_{i,i^{*}}^{0})$ is called the \emph{valency} of $R_{i}$. A relation $R_{i}$ is called \emph{symmetric} if $i=i^{*}$, and \emph{non-symmetric} otherwise. An association scheme is called \emph{symmetric} if all relations are symmetric, and \emph{non-symmetric} otherwise.

Let $\mathfrak{X}=(X,\{R_{i}\}_{i=0}^{d})$ be an association scheme with $|X|=n$. The \emph{adjacency matrix} $A_{i}$ of $R_{i}$ is the $n\times n$ matrix whose $(x,y)$-entry is $1$ if $(x,y)\in R_{i}$, and $0$ otherwise. 
We say that $\mathfrak{X}$ is a \emph{$P$-polynomial} association scheme 
with respect to the ordering $R_0,R_1,\ldots,R_d$, if there exist some complex coefficient polynomials $v_{i}(x)$ of degree $i$ ($0\leq i\leq d$) such that $A_i=v_{i}(A_1).$

We close this section with a property of intersection numbers which will be used frequently in the remainder of this paper.

\begin{lemma}\label{jcs}
{\rm (\cite[Section 2, Proposition 2.2]{EB84})} Let $\mathfrak{X}=(X,\{R_{i}\}_{i=0}^{d})$ be an association scheme. The following hold:
\begin{itemize}
\item [{\rm(i)}] $p^{h}_{i,j}k_{h}=p^{i}_{h,j^{*}}k_{i}=p^{j}_{i^{*},h}k_{j}$;

\item [{\rm(ii)}] $\sum^{d}_{r=0}p^{r}_{e,l}p^{h}_{m,r}=\sum^{d}_{t=0}p^{t}_{m,e}p^{h}_{t,l}$.
\end{itemize}
\end{lemma}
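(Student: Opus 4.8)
The plan is to prove (i) by counting a single set of ordered triples in three different ways, and (ii) from the associativity of matrix multiplication in the adjacency algebra. Throughout, write $n=|X|$ and let $A_0,\dots,A_d$ denote the adjacency matrices of the relations. First I would record two easy consequences of the axioms: taking $h=0$ in axiom (iv) shows $|R_i(x)|=p^{0}_{i,i^{*}}=k_i$ for every $x$, so that $|R_i|=nk_i$; and $A_0,\dots,A_d$ are linearly independent, being nonzero $\{0,1\}$-matrices with pairwise disjoint supports by axiom (ii).

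For (i), fix $h,i,j$ and set
$$N=\bigl|\{(x,y,z)\in X^{3}:(x,y)\in R_h,\ (x,z)\in R_i,\ (z,y)\in R_j\}\bigr|.$$
Grouping these triples by the pair $(x,y)$: for each of the $nk_h$ choices of $(x,y)\in R_h$, the admissible $z$ are exactly those of $R_i(x)\cap R_{j^{*}}(y)$, whence $N=nk_h\,p^{h}_{i,j}$. Grouping instead by $(x,z)\in R_i$: the admissible $y$ are those of $R_h(x)\cap R_j(z)$, which after writing $R_j(z)=R_{(j^{*})^{*}}(z)$ has cardinality $p^{i}_{h,j^{*}}$, so $N=nk_i\,p^{i}_{h,j^{*}}$. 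Grouping by $(z,y)\in R_j$: the admissible $x$ are those of $R_{i^{*}}(z)\cap R_{h^{*}}(y)$, of cardinality $p^{j}_{i^{*},h}$, so $N=nk_j\,p^{j}_{i^{*},h}$. Cancelling the common factor $n$ yields (i).

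For (ii), the key step is the identity $A_iA_j=\sum_{h=0}^{d}p^{h}_{i,j}A_h$, valid because $(A_iA_j)_{x,y}$ counts the vertices $z$ with $z\in R_i(x)\cap R_{j^{*}}(y)$, which equals $p^{h}_{i,j}$ whenever $(x,y)\in R_h$ by axiom (iv). I would then apply this identity twice on each side of $A_m(A_eA_l)=(A_mA_e)A_l$ to get
$$A_m(A_eA_l)=\sum_{h}\Bigl(\sum_{r=0}^{d}p^{r}_{e,l}\,p^{h}_{m,r}\Bigr)A_h,\qquad (A_mA_e)A_l=\sum_{h}\Bigl(\sum_{t=0}^{d}p^{t}_{m,e}\,p^{h}_{t,l}\Bigr)A_h,$$
and then compare the coefficient of each $A_h$ using the linear independence noted above; this gives (ii). Note that no commutativity of the scheme is used anywhere, consistent with the generality of the statement.

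This argument has no genuine obstacle. The only place demanding attention is part (i), where each of the three intersection sets must be rewritten in the canonical form $R_a(u)\cap R_{b^{*}}(v)$ in order to identify the correct starred index via axiom (iii); this is purely a matter of careful bookkeeping.
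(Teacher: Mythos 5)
Your proof is correct: the triple-counting argument for (i) and the associativity-plus-linear-independence argument in the adjacency algebra for (ii) are exactly the standard proofs of these identities, and this is the route taken in the reference the paper cites (the paper itself offers no proof, only the citation to Bannai--Ito). The bookkeeping with starred indices, the identification $k_i=p^{0}_{i,i^{*}}$, and the observation that no commutativity is needed are all handled correctly, so there is nothing to add.
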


\section{Distance-regular digraphs}

As a directed version of distance-regular graph, Damerell \cite{RMD81} introduced the concept of distance-regular digraphs. A digraph $\Gamma$ of diameter $d$ is said to be \emph{distance-regular} if $\mathfrak{X}(\Gamma)=(V\Gamma,\{\Gamma_{i}\}_{0\leq i\leq d})$ is a $P$-polynomial non-symmetric association scheme with respect to the ordering $\Gamma_0,\Gamma_1,\ldots,\Gamma_d$.
Moreover, every $P$-polynomial non-symmetric association scheme arises from a distance-regular digraph in this way. 

A digraph $\Gamma$ of girth $g$ is \emph{stable} if $0<\partial_{\Gamma}(x,y)<g$ implies that $\partial_{\Gamma}(x,y)+\partial_{\Gamma}(y,x)=g.$
In \cite{RMD81}, Damerell proved that a distance-regular digraph is stable, and presented  the following basic results.

\begin{thm}\label{drdg}
{\rm (\cite[Theorems 2 and 4]{RMD81})} Let $\Gamma$ be a distance-regular digraph of diameter $d$ and girth $g$. Then $d=g-1$ (short type) or $d=g$ (long type). Moreover, if $d=g$, then $\Gamma$ is a lexicographic product of a distance-regular digraph of diameter $g-1$ by an empty graph.
\end{thm}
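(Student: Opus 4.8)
\medskip\noindent
The plan is to use the ``tridiagonal'' shape of the intersection array of $\Gamma_1$ forced by $P$-polynomiality, together with the $*$-pairing of the classes, to get the dichotomy, and then, when $d=g$, to recognise the top relation $\Gamma_g$ as ``lying in a common block'' of a lexicographic product. Throughout, $P$-polynomiality enters through $\deg v_j=j$, which gives $p^{\,j}_{1,j-1}>0$ for $1\le j\le d$ and $p^{\,j}_{1,m}=0$ when $m\notin\{j-1,j,j+1\}$. First, since $\Gamma_1^{\mathrm{T}}=\Gamma_{1^*}$, every arc $(x,y)$ has $\partial_\Gamma(y,x)=1^*$, so a shortest circuit through an arc has length $1+1^*$ and hence $g=1+1^*$, i.e.\ $1^*=g-1$. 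Next I would establish \emph{stability} ($i^*=g-i$ for $1\le i\le g-1$) by induction on $i$: the inequality $\partial_\Gamma(x,y)+\partial_\Gamma(y,x)\ge g$ is automatic (concatenate shortest paths into a circuit), the base case $i=1$ is $1^*=g-1$, and the inductive step combines the girth, the tridiagonal structure, and the weight relations of Lemma~\ref{jcs}(i) to force $(i+1)^*\le g-i-1$, hence equality.

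Given stability, $*$ pairs $i\leftrightarrow g-i$ on $\{1,\dots,g-1\}$ and fixes $0$, so it permutes $\{g,\dots,d\}$ among themselves; in particular, if $d=g$ then $g^*=g$ and $\Gamma_g$ is symmetric. To finish the dichotomy I would rule out $d\ge g+1$ by analysing the columns $p^{\,g}_{1,\cdot}$ and $p^{\,g+1}_{1,\cdot}$ of the intersection array against Lemma~\ref{jcs} and the stability of all classes of index $<g$: a pair at distance $g+1$ turns out to carry incompatible local data. Here one must be careful, since, unlike the undirected case, $p^{\,i}_{1,i+1}$ need not be positive for $i<d$, so one cannot simply argue that ``the array runs on too long''. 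This yields $d=g-1$ or $d=g$.

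Now suppose $d=g$. The crucial point is $a_g:=p^{\,g}_{1,g}=0$, which I would obtain from stability of the lower classes and the relations of Lemma~\ref{jcs} (an out-neighbour $z$ of $x$ with $\partial_\Gamma(z,y)=g$, for $(x,y)\in\Gamma_g$, is excluded). Since also $p^{\,g}_{1,g+1}=0$ ($g+1>d$), the row-sum gives $p^{\,g}_{1,g-1}=k_1$; as $(g-1)^*=1$ this reads $|\Gamma_1(x)\cap\Gamma_1(y)|=k_1$, i.e.\ $\Gamma_1(x)=\Gamma_1(y)$, whenever $(x,y)\in\Gamma_g$. Conversely, if $\Gamma_1(x)=\Gamma_1(y)$ with $x\ne y$ and $x=w_0\to w_1\to\cdots\to w_i=y$ is a shortest path, then $y\to w_1\to\cdots\to w_i=y$ is a closed walk of length $i$, forcing $i\ge g$ and hence $i=g$. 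Thus the relation $x\approx y\iff\Gamma_1(x)=\Gamma_1(y)$ equals $\Gamma_0\cup\Gamma_g$, is an equivalence relation, and, using $A_1^{\mathrm{T}}=A_{g-1}=v_{g-1}(A_1)$ (so that equal $x$- and $y$-rows of $A_1$ also force equal $x$- and $y$-columns), is a congruence of $\Gamma$. Writing $\Delta=\Gamma/{\approx}$, every block has size $m=k_g+1$, there are no arcs within a block (no loops), and the above shows whether $x\to y$ depends only on the blocks of $x,y$; hence $\Gamma\cong\Delta[\overline{K_m}]$. Finally $\partial_\Gamma$ descends to $\Delta$ (equal rows of $A_1$ propagate through $A_i=v_i(A_1)$), takes exactly the values $1,\dots,g-1$, and $A_i(\Delta)$ is again a degree-$i$ polynomial in $A_1(\Delta)$; so $\Delta$ is a distance-regular digraph of diameter $g-1$.

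I expect the arithmetic ruling out $d\ge g+1$ and the vanishing $a_g=0$ to be the main obstacle: both say that the intersection array of $\Gamma_1$, pinned by stability to that of a directed $g$-cycle out to radius $g-1$, cannot leak past the girth except into a single symmetric class. Tracking the $*$-indices through the identities of Lemma~\ref{jcs}, which (unlike in a symmetric scheme) genuinely move relations around, is where the real work lies; the quotient and lexicographic-product identification is then essentially formal.
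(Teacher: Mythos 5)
The paper itself does not prove this theorem: it is quoted from Damerell \cite[Theorems 2 and 4]{RMD81}, so the only question is whether your sketch amounts to an independent proof. It does not, because the three load-bearing claims are asserted rather than argued. (1) The inductive step of stability, forcing $(i+1)^*\le g-i-1$, is described only as ``combining the girth, the tridiagonal structure and Lemma~\ref{jcs}(i)''; no inequality is actually derived, and this step is genuinely nontrivial. (2) The exclusion of $d\ge g+1$ is reduced to ``a pair at distance $g+1$ turns out to carry incompatible local data'', again with no computation. (3) The key vanishing $a_g=p^{g}_{1,g}=0$ in the long case is likewise only announced. These three points are exactly the substance of Damerell's theorems; once they are granted, the rest of your argument --- $\Gamma_1(x)=\Gamma_1(y)$ if and only if $(x,y)\in\Gamma_0\cup\Gamma_g$, equal $x$- and $y$-rows of $A_1$ forcing equal columns via $A_1^{\rm T}=v_{g-1}(A_1)$ (one must also note the constant term of $v_{g-1}$ vanishes here, which follows since $\partial_\Gamma(y,x)=g\neq g-1$), blocks of size $k_g+1$, and descent to a diameter-$(g-1)$ distance-regular digraph --- is essentially correct and is indeed how the long case is handled. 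So the architecture is right, but the proof is missing precisely where you yourself say ``the real work lies''.

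A further caution: the blanket claim $p^{j}_{1,m}=0$ for $m\notin\{j-1,j,j+1\}$ is not a consequence of $\deg v_i=i$ in the non-symmetric setting. The degree argument (equivalently, the triangle inequality $j\le 1+m$) only gives $p^{j}_{1,m}=0$ for $m<j-1$; the vanishing for $m>j+1$ is false in general --- for instance $p^{0}_{1,g-1}=k_1\neq0$ --- and even for $j\ge1$ it is not formal, since in a digraph an out-neighbour of $x$ may be farther from $y$ than $x$ is. Your one explicit use of it, the row sum $\sum_m p^{g}_{1,m}=k_1$ yielding $p^{g}_{1,g-1}=k_1$, survives because there $m<g-1$ is excluded by the triangle inequality, $m>g$ by $d=g$, and $m=g$ by the (unproved) $a_g=0$; but any appeal to the full tridiagonal shape inside the deferred steps (stability, $d\le g$, $a_g=0$) would need separate justification.
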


In view of \cite{DL89} and the stability of distance-regular digraphs, we say that the \emph{girth} of a $P$-polynomial non-symmetric association scheme $(V\Gamma,\{\Gamma_{i}\}_{0\leq i\leq d})$ with respect to the ordering $\Gamma_0,\Gamma_1,\ldots,\Gamma_d$ is the girth of $(V\Gamma,\Gamma_1)$.

Enomoto and Mena \cite{EM} showed that two one-parameter families of distance-regular digraphs of girth $4$ could possibly exist. Subsequently, Liebler and Mena \cite{LM} gave an infinite one-parameter family of distance-regular digraphs over an extension ring of $\mathbb{Z}/4\mathbb{Z}$.
In 1993, Leonard and Nomura \cite{ln} proved that except directed cycles all short distance-regular digraphs  have girth not more than $8$.


In the remaining of this section, we always assume that $\Gamma$ is a distance-regular digraph of valency $k_1>k_g+1$ with diameter $d$ and  girth $g$. Next we give some results concerning such digraphs which will be used frequently in this paper.



%
%
%
%

\begin{lemma}\label{p122}
If $1\leq i\leq g-1,$ then $p^{1}_{2,i}\neq0$.
\end{lemma}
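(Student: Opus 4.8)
The plan is to proceed by induction on $i$, using the $P$-polynomial structure together with the stability of $\Gamma$. The base case $i=1$ amounts to showing $p^1_{2,1}\neq 0$. Since $\mathfrak{X}$ is $P$-polynomial with respect to $\Gamma_0,\Gamma_1,\ldots,\Gamma_d$, we have $A_2=v_2(A_1)$ for a polynomial $v_2$ of degree $2$; comparing the $(x,y)$-entries for $(x,y)\in\Gamma_1$ shows that $p^1_{2,1}$ is (up to a scalar coming from the leading coefficient of $v_2$) the number of paths of length $2$ from $x$ to $y$, and a $P$-polynomial scheme cannot have $p^1_{2,1}=0$ unless $A_1^2$ lies in the span of $A_0,A_1$ alone, which would force $d\leq 1$, contradicting $g\geq 4$ (recall $k_1>k_g+1$ rules out the directed-cycle and small-girth degeneracies and Leonard--Nomura give $g\in\{4,6,8\}$ for the short nontrivial cases). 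More concretely, since $R_2\neq\emptyset$ there is a pair at distance $2$, hence a path of length $2$ realizing it; by the $P$-polynomial relation such a path from $x$ to a vertex at distance $1$ must also exist whenever $(x,y)\in R_1$, giving $p^1_{2,1}>0$.

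For the inductive step, suppose $p^1_{2,i-1}\neq 0$ for some $2\leq i\leq g-1$; I want $p^1_{2,i}\neq 0$. The key tool is Lemma \ref{jcs}(ii) applied with a suitable choice of indices, which converts a statement about two-step configurations through $R_2$ into one about configurations through $R_1$. Expanding $\sum_r p^r_{2,1}p^{i}_{1,r}=\sum_t p^t_{1,2}p^{i}_{t,1}$ (or a mirrored version), the left side is controlled by $p^1_{2,1}$ and $p^{i-1}_{2,1}$-type terms while the right side includes $p^1_{1,2}p^i_{1,1}$; one then shows $p^i_{1,1}\neq 0$ separately (this is essentially the statement that $\Gamma_1$ together with $\Gamma_1$ reaches distance $i$, which follows again from $P$-polynomiality: $A_1^i$ has a nonzero $A_i$-component since $v_i$ has degree exactly $i$), and deduces that not all terms on the right can vanish, forcing some $p^1_{2,i}$ or a closely related intersection number to be nonzero. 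The stability hypothesis $0<\partial(x,y)<g\Rightarrow \partial(x,y)+\partial(y,x)=g$ is what lets me keep the relevant pairs inside the "short" regime $R_1,\ldots,R_{g-1}$ and relate $i^*$ to $g-i$, so the bookkeeping of starred indices via Lemma \ref{jcs}(i) stays manageable.

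An alternative, possibly cleaner route is purely combinatorial: because $(x,y)\in R_i$ with $i\leq g-1$ and $\Gamma$ is stable, $\partial(x,y)=i$ and $\partial(y,x)=g-i$, so $x$ and $y$ lie on a common circuit of length $g$. Walking one step along that circuit from $x$ to a vertex $x'$ with $(x,x')\in R_1$ gives $\partial(x',y)=i-1$, and dually stepping into $y$; combining, one wants a vertex $z$ with $(x,z)\in R_2$ and $(z,y)\in R_i$, i.e.\ $z$ at distance $2$ from $x$ and still "on the way" to $y$. Using $p^1_{2,1}>0$ and the transitivity-like regularity of $\mathfrak{X}$, extend a $2$-path out of $x$ and argue via Lemma \ref{jcs}(ii) that it can be completed, along the $g$-circuit structure, to reach $y$ in exactly $i$ more steps; the girth constraint prevents the $2$-path from "overshooting." I would phrase this as: the sum $\sum_r p^1_{2,r}p^i_{r,?}$ counts exactly the relevant completions and is positive because the $r=1$ term already is (by induction, $p^1_{2,i}$ appears through $p^1_{2,1}p^{?}_{1,i}$ with a nonzero partner).

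The main obstacle I anticipate is handling the index arithmetic near the boundary $i=g-1$, where the "short" relations run out and one must be careful that the intermediate distances produced (e.g.\ $i+1$ or $i-1$ shifted by a $\pm 2$) do not leak into $R_g$ or beyond; this is precisely where $d=g-1$ versus $d=g$ (Theorem \ref{drdg}) and the hypothesis $k_1>k_g+1$ enter, and where I expect to need an auxiliary observation — likely stated just before or after this lemma in the paper — pinning down $p^1_{2,g-1}$ or $p^2_{1,g-1}$ directly from stability. The secondary difficulty is justifying $p^i_{1,1}\neq 0$ (and more generally that consecutive short relations "chain together"), which should follow from $P$-polynomiality but needs a clean one-line argument rather than an appeal to intuition.
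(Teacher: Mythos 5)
Your proposal does not go through as written; it is a plan with two concrete errors at its load-bearing points. In the base case you identify $p^{1}_{2,1}$ with (a scalar multiple of) the number of length-$2$ paths from $x$ to $y$; that number is $p^{1}_{1,1}=(A_1^2)_{xy}$, whereas $p^{1}_{2,1}$ counts vertices $z$ with $(x,z)\in R_2$ and $(z,y)\in R_1$, a different quantity, and $p^{1}_{2,1}=0$ would not force $A_1^2\in\operatorname{span}\{A_0,A_1\}$, so the claimed contradiction with $d\leq 1$ is not valid. Worse, the inductive step rests on $p^{i}_{1,1}\neq 0$, justified by the degree of $v_i$; but $p^{i}_{1,1}$ counts $z$ with $\partial_\Gamma(x,z)=\partial_\Gamma(z,y)=1$ for $(x,y)\in R_i$, which forces $\partial_\Gamma(x,y)\leq 2$, so $p^{i}_{1,1}=0$ for every $i\geq 3$. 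The fact that $A_1^{i}$ has a nonzero $A_i$-component is a statement about numbers such as $p^{i}_{1,i-1}$, not $p^{i}_{1,1}$. With that term gone, your identity $\sum_r p^{r}_{2,1}p^{i}_{1,r}=\sum_t p^{t}_{1,2}p^{i}_{t,1}$ gives no control on $p^{1}_{2,i}$, and neither your induction nor the ``combinatorial'' alternative ever specifies a substitution in Lemma \ref{jcs}(ii) that actually isolates the target number; the phrases ``argue that it can be completed'' are precisely the missing content. You also only gesture at the reduction between $d=g-1$ and $d=g$, which must be carried out (via Theorem \ref{drdg}) before $k_1>1$ can be used.

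For contrast, the paper's proof is a single computation with no induction: reduce to $d=g-1$ by Theorem \ref{drdg} (so $k_g=0$ and $k_1>1$), substitute $(h,m,e,l)=(g-i,1,1,g-1)$ into Lemma \ref{jcs}(ii), and use stability together with Lemma \ref{jcs}(i) to cancel $p^{g-1}_{1,g-1}p^{g-i}_{1,g-1}$ against $p^{1}_{1,1}p^{g-i}_{1,g-1}$, leaving $p^{g-i-1}_{1,g-1}p^{g-i}_{1,g-i-1}\leq p^{2}_{1,1}p^{g-i}_{2,g-1}$. Leonard--Nomura's Lemma 2.1 gives $p^{g-i-1}_{1,g-1}\neq 0$, hence $p^{g-i}_{2,g-1}\neq 0$, and finally $k_1p^{1}_{2,i}=k_{g-i}p^{g-i}_{2,g-1}$ (Lemma \ref{jcs}(i) with stability and commutativity) yields the claim. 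The stability-driven cancellation $p^{g-1}_{1,g-1}=p^{1}_{1,1}$ is the key idea your sketch is missing; without it, or some working substitute, the inequality you need never materializes.
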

\begin{proof}
In view of Theorem \ref{drdg},
we only need to consider the case $d=g-1$. Since $k_g=0$, one gets $k_1>1$. By setting $(h,m,e,l)=(g-i,1,1,g-1)$ in Lemma \ref{jcs} (ii), we have
\begin{align}
&p^{g-i-1}_{1,g-1}p^{g-i}_{1,g-i-1}+p^{g-1}_{1,g-1}p^{g-i}_{1,g-1}\leq p^{1}_{1,1}p^{g-i}_{1,g-1}+p^{2}_{1,1}p^{g-i}_{2,g-1}.\label{eq1}
\end{align}
Since $\Gamma$ is stable, we obtain $p^{g-1}_{1,g-1}=p^{1}_{1,1}$ from Lemma \ref{jcs} (i), and so $$p^{g-i-1}_{1,g-1}p^{g-i}_{1,g-i-1}\leq p^{2}_{1,1}p^{g-i}_{2,g-1}$$ from \eqref{eq1}. Note that $k_1>1$. In view of \cite[Lemma 2.1]{ln}, one has $p^{g-i-1}_{1,g-1}\neq0$, and so $p^{g-i}_{2,g-1}\neq0$. By Lemma \ref{jcs} (i), and the stability and commutativity of $\Gamma$, we have $$k_1p^{1}_{2,i}=k_{g-i}p^{g-i}_{2,g-1}\neq0.$$ This completes the proof of this lemma.
\end{proof}

The stability and commutativity of $\Gamma$ will be used frequently in the sequel, so we no longer refer to it for the sake of simplicity.

\begin{lemma}\label{pi}
If $0\leq i\leq\min\{3,g-3\}$, then $p^{i}_{1,i+1}<p^{i}_{g-1,i+1}$.
\end{lemma}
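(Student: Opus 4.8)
The plan is to establish the strict inequality $p^{i}_{1,i+1}<p^{i}_{g-1,i+1}$ by combining the two-way distance structure of the stable digraph $\Gamma$ with the valency identities of Lemma \ref{jcs}(i). The starting observation is that for $0\le i\le \min\{3,g-3\}$ the relation $\Gamma_{i+1}$ (a ``forward'' step) behaves very differently from $\Gamma_{g-1}$ (which, by stability, is the ``reverse'' of a short hop, i.e. $(g-1)^{*}$ is small). So I would first use stability to identify $R_{g-1}=R_{1^{*}}$ (equivalently $p^{0}_{g-1,1}=k_{g-1}=k_1$ and $(g-1)^{*}=1$), and more generally to pin down which $p^{i}_{g-1,i+1}$ are forced to be nonzero. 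Then $p^{i}_{g-1,i+1}$ counts vertices $z$ with $\wz\partial(x,z)$ of the ``$(g-1)$-type'' and $\wz\partial(z,y)$ of the ``$(i+1)$-type'' for $(x,y)\in R_i$, and I want to show this count strictly exceeds $p^{i}_{1,i+1}$.

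The key step is a transfer argument. Using Lemma \ref{jcs}(i) with the triple $(h,i,j)=(i,1,i+1)$ and then $(i,g-1,i+1)$, I rewrite
\begin{align}
p^{i}_{1,i+1}\,k_i &= p^{1}_{i,(i+1)^{*}}\,k_1, \notag\\
p^{i}_{g-1,i+1}\,k_i &= p^{g-1}_{i,(i+1)^{*}}\,k_{g-1}=p^{g-1}_{i,(i+1)^{*}}\,k_1, \notag
\end{align}
so the claim reduces to $p^{1}_{i,(i+1)^{*}}<p^{g-1}_{i,(i+1)^{*}}$, i.e. to comparing how many $z$ complete a path of the relevant type starting with $R_1$ versus starting with $R_{g-1}$. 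Since $\Gamma$ is stable and $k_1>k_g+1$, for $i$ in the stated (small) range both $i$ and $(i+1)^{*}$ are ``short'' indices, and a path $x\to z$ in $R_1$ followed by $z\to y$ in $R_{(i+1)^{*}}$ is quite constrained, whereas the longer first step $R_{g-1}$ (the reverse hop) admits genuinely more completions — I would make this precise by exhibiting, for $(x,y)\in R_i$, an explicit injection from $P_{1,(i+1)^{*}}(x,y)$ into $P_{g-1,(i+1)^{*}}(x,y)$ and then producing one extra element not in the image, for instance a vertex lying on a suitable short circuit through $x$. The hypothesis $i\le\min\{3,g-3\}$ is exactly what guarantees such a circuit vertex exists and has the right two-way distances, and $g\le 8$ (from Leonard--Nomura, for short type) keeps the case analysis finite; the long type $d=g$ is handled by passing through Theorem \ref{drdg} to the associated diameter-$(g-1)$ digraph.

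The main obstacle I anticipate is the injectivity-plus-surplus bookkeeping: showing the map on $P_{1,(i+1)^{*}}\to P_{g-1,(i+1)^{*}}$ is well defined (the composed two-way distances land in the correct classes) and simultaneously that it misses at least one vertex requires careful use of the triangle-type inequalities for $\wz\partial$ together with stability, and it may split into subcases according to $i=0,1,2,3$ and the parity of $g$. I would first dispose of $i=0$ (where $p^{0}_{1,1}$ versus $p^{0}_{g-1,1}=k_1$ is immediate once $p^{0}_{1,1}<k_1$, which follows since $k_1>k_g+1\ge 1$ forces $p^{1}_{1,1}<k_1$ via Lemma \ref{jcs}(i) and girth $g>1$), then bootstrap to larger $i$ using Lemma \ref{jcs}(ii) in the same spirit as the proof of Lemma \ref{p122}, peeling off one index at a time.
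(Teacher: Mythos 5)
Your proposal does not contain a proof of the inequality; it is a plan whose central step is missing. The engine you propose --- an explicit injection from $P_{1,\,\cdot}(x,y)$ into $P_{g-1,\,\cdot}(x,y)$ for a fixed $(x,y)\in R_i$, together with one extra vertex ``on a suitable short circuit'' outside the image --- is never constructed, and nothing in the hypotheses suggests a canonical way to send an out-neighbour $z$ of $x$ with $\partial_\Gamma(z,y)=i+1$ to an in-neighbour of $x$ with the same property; you yourself flag the well-definedness and surplus bookkeeping as an ``anticipated obstacle,'' which is precisely the content of the lemma. There is also a bookkeeping slip: after applying Lemma \ref{jcs}(i) you reduce the claim to $p^{1}_{i,(i+1)^{*}}<p^{g-1}_{i,(i+1)^{*}}$, where the base relation is now $R_1$ resp.\ $R_{g-1}$, but you then propose to compare $P_{1,(i+1)^{*}}(x,y)$ with $P_{g-1,(i+1)^{*}}(x,y)$ for $(x,y)\in R_i$, which (if read literally) compares $p^{i}_{1,(i+1)^{*}}$ with $p^{i}_{g-1,(i+1)^{*}}$ --- numbers that coincide neither with the reduced claim nor with the original one, since $P_{1,i+1}(x,y)=R_1(x)\cap R_{(i+1)^{*}}(y)$ already encodes the transpose.

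The paper's proof is entirely algebraic and does the quantitative work you leave open: setting $(h,m,e,l)=(i,1,i,g-1)$ in Lemma \ref{jcs}(ii) gives $\sum_{r}p^{r}_{i,g-1}p^{i}_{1,r}=\sum_{t}p^{t}_{1,i}p^{i}_{t,g-1}$, and Lemma \ref{jcs}(i) converts both sides into $\sum_{r}(p^{i}_{1,r})^{2}/k_r=\sum_{t}(p^{i}_{g-1,t})^{2}/k_t$; after cancelling the equal terms $p^{i}_{1,g-1}=p^{i}_{g-1,1}$ and $p^{i}_{1,i}=p^{i}_{g-1,i}$, strictness comes from $p^{1}_{1,0}\neq0$ for $i=1$, from $p^{2}_{1,1}\neq0$ for $i=2$, and for $i=3$ from feeding the $i=2$ conclusion back through Lemma \ref{jcs}(i) to get $p^{3}_{g-1,2}<p^{3}_{1,2}$. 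Your closing remark about ``bootstrapping via Lemma \ref{jcs}(ii) in the spirit of Lemma \ref{p122}'' points in this direction, but you write down no identity, perform no cancellation, and carry out no case analysis for $i=1,2,3$, so the inequality is never established. To repair the proposal you would either have to actually build and verify the claimed injection and surplus vertex (which looks much harder than the algebraic route), or carry out the sum-of-squares computation as the paper does.
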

\begin{proof}
In view of Theorem \ref{drdg},
we only need to consider the case $d=g-1$.  It is valid when $i=0$. By setting $(h,m,e,l)=(i,1,i,g-1)$ in Lemma \ref{jcs} (ii), we obtain $$\sum_{r=i-1}^{g-1}p^{r}_{i,g-1}p^{i}_{1,r}=\sum_{t=1}^{i+1}p^{t}_{1,i}p^{i}_{t,g-1}.$$
According to Lemma \ref{jcs} (i), one gets, for $i-1\leq r<g$ and $1\leq t\leq i+1$, $$k_rp^{r}_{i,g-1}=k_{i}p^{i}_{1,r}~\mbox{and}~ k_tp^{t}_{1,i}=k_ip^{i}_{g-1,t},$$ which imply $$\sum_{r=i-1}^{g-1}(p^{i}_{1,r})^{2}/k_r=\sum_{t=1}^{i+1}(p^{i}_{t,g-1})^{2}/k_t.$$ Since $p^{i}_{1,g-1}=p^{i}_{g-1,1}$ and $i+1<g-1$, we get
\begin{align}
&(p^{i}_{1,i-1})^{2}/k_{i-1}+(p^{i}_{1,i})^{2}/k_{i}+(p^{i}_{1,i+1})^{2}/k_{i+1}\leq\sum_{t=2}^{i+1}(p^{i}_{g-1,t})^{2}/k_{t}.\label{eq2}
\end{align}
If $i=1$, from \eqref{eq2}, then $$(p^{1}_{1,0})^{2}+(p^{1}_{1,1})^{2}/k_{1}+(p^{1}_{1,2})^{2}/k_{2}\leq (p^{1}_{g-1,2})^{2}/k_{2},$$ and so $p^{1}_{1,2}<p^{1}_{g-1,2}$ since $p^{1}_{1,0}\neq0$. In view of Lemma \ref{jcs} (i), we have $p^{i}_{1,i}=p^{i}_{g-1,i}$. If $i=2$, from \eqref{eq2}, then $$(p^{2}_{1,1})^{2}/k_{1}+(p^{2}_{1,3})^{2}/k_{3}\leq(p^{2}_{g-1,3})^{2}/k_{3},$$ and so $p^{2}_{1,3}<p^{2}_{g-1,3}$ since $p^{2}_{1,1}\neq0$.

We only need to consider the case $i=3$. By Lemma \ref{jcs} (i), we get $$p_{1,3}^2k_2=p^{3}_{g-1,2}k_3~\mbox{and}~p^{2}_{g-1,3}k_2=p^{3}_{1,2}k_3.$$
The fact  $p^{2}_{1,3}<p^{2}_{g-1,3}$ implies $p^{3}_{g-1,2}<p^{3}_{1,2}$. Since $p^{3}_{1,3}=p^{3}_{g-1,3}$, from \eqref{eq2}, one has $$(p^{3}_{1,2})^{2}/k_2+(p^{3}_{1,4})^{2}/k_4\leq(p^{3}_{g-1,2})^{2}/k_2+(p^{3}_{g-1,4})^{2}/k_4.$$ It follows that $p^{3}_{1,4}<p^{3}_{g-1,4}$.
\end{proof}

\begin{lemma}\label{ai}
If $0\leq i\leq\min\{2,g/2-1\}$, then $p^{i}_{1,i}<p^{i+1}_{1,i+1}$.
\end{lemma}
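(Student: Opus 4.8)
The plan is to follow the same strategy that worked for Lemmas \ref{p122} and \ref{pi}: reduce to the short type via Theorem \ref{drdg}, then exploit the quadratic identity coming from Lemma \ref{jcs}(ii) together with the triangle-inequality patterns on intersection numbers that stability forces. By Theorem \ref{drdg} we may assume $d=g-1$. The case $i=0$ reads $p^0_{1,0}=1<p^1_{1,1}$, which holds because $p^1_{1,1}\ge 1$: indeed $p^{g-1}_{1,g-1}=p^1_{1,1}$ by stability and Lemma \ref{jcs}(i), and this quantity is nonzero since $k_1>1$ (by \cite[Lemma 2.1]{ln} any distance-$1$ arc lies on a shortest path realizing distance $g-1$ backward, forcing $p^{g-1}_{1,g-1}\ne 0$). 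So the content is in $i=1$ and $i=2$.

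For $i=1$ I would set $(h,m,e,l)=(1,1,1,1)$ in Lemma \ref{jcs}(ii) to get $\sum_r p^{r}_{1,1}p^{1}_{1,r}=\sum_t p^{t}_{1,1}p^{1}_{t,1}$, and more usefully combine this with the valency relations $k_r p^{r}_{1,1}=k_1 p^{1}_{1,r}$ from Lemma \ref{jcs}(i) to turn sums of products into sums of squares over valencies, exactly as in the proof of Lemma \ref{pi}. The goal is to isolate $p^1_{1,1}$ against $p^2_{1,2}$. Here the key auxiliary input is Lemma \ref{pi} with $i=1$: $p^1_{1,2}<p^1_{g-1,2}$, which via Lemma \ref{jcs}(i) translates to a comparison between $p^2_{1,1}$ and $p^2_{g-1,1}=p^2_{1,1}$... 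I would instead feed it forward: $k_2 p^2_{1,2}=k_1 p^1_{2,2}$ and I want to compare $p^1_{2,2}$ with $p^1_{1,1}$. Counting walks of type $(1,1)$ versus $(2,2)$ from a fixed vertex, the natural identity is $(h,m,e,l)=(2,1,1,1)$ or summing $p^2_{1,1}$-weighted contributions; the essential point will be that the extra term $p^2_{2,2}$ (or a $p^2_{1,3}$ term) is strictly positive, which follows from Lemma \ref{p122}.

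For $i=2$ the same scheme applies with one more layer: use Lemma \ref{jcs}(ii) with a parameter choice that relates $p^2_{1,2}$ to $p^3_{1,3}$, convert to squares-over-valencies, and use Lemma \ref{pi} with $i=2$ (giving $p^2_{1,3}<p^2_{g-1,3}$) together with $p^3_{1,3}=p^3_{g-1,3}$ and the positivity statements of Lemma \ref{p122} to force the strict inequality. I expect the main obstacle to be bookkeeping: pinning down exactly which instance of Lemma \ref{jcs}(ii) produces a clean inequality of the form "LHS involving $(p^{i}_{1,i})^2/k_i$ $\le$ RHS involving $(p^{i+1}_{1,i+1})^2/k_{i+1}$ plus nonnegative junk," and then verifying that at least one junk term is \emph{strictly} positive (this is where Lemma \ref{p122}, i.e. $p^1_{2,i}\ne 0$ for $1\le i\le g-1$, and the hypothesis $g/2-1\ge i$ ensuring the relevant indices stay in range and distinct, both get used). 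The constraint $i\le g/2-1$ is presumably exactly what guarantees $2i+2\le g$ so that $R_{i+1}$ is still "short" and the symmetry $p^{i}_{1,i+1}=p^{i}_{i+1,1}$-type collapses do not degenerate.
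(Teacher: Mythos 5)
The heart of the lemma is the cases $i=1,2$, and there your proposal never reaches a working argument. The specific instances of Lemma \ref{jcs}(ii) you name, $(h,m,e,l)=(1,1,1,1)$ and $(2,1,1,1)$, are vacuous: a $P$-polynomial scheme is commutative, so both sides of these identities coincide term by term and no inequality can be extracted from them. The auxiliary relation you invoke, $k_2p^{2}_{1,2}=k_1p^{1}_{2,2}$, is also false in general: Lemma \ref{jcs}(i) gives $k_2p^{2}_{1,2}=k_1p^{1}_{2,2^{*}}=k_1p^{1}_{2,g-2}$, and $2^{*}=g-2\neq 2$ unless $g=4$. The missing idea is the correct parameter choice: the paper takes $(h,m,e,l)=(i+1,1,i,g-1)$, i.e.\ it pairs against $R_{g-1}=R_{1^{*}}$. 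Then the $r=g-1$ term cancels against the $t=1$ term because $p^{g-1}_{i,g-1}=p^{1}_{1,i}$, and the $r=g-2$ term is strictly positive by Lemma \ref{p122} (this is the sole source of strictness); discarding it yields $\sum_{r=i}^{g-3}p^{r}_{i,g-1}p^{i+1}_{1,r}<\sum_{t=2}^{i+1}p^{t}_{1,i}p^{i+1}_{t,g-1}$. For $i=1$ this reads $p^{1}_{1,1}p^{2}_{1,1}<p^{2}_{1,1}p^{2}_{1,2}$ and one divides by $p^{2}_{1,1}\neq0$; for $i=2$ one first converts Lemma \ref{pi} with $i=2$ (namely $p^{2}_{1,3}<p^{2}_{g-1,3}$) via Lemma \ref{jcs}(i) into $p^{3}_{2,g-1}<p^{3}_{1,2}$, and then the displayed inequality factors as $p^{2}_{1,2}(p^{3}_{1,2}-p^{3}_{2,g-1})<p^{3}_{1,3}(p^{3}_{1,2}-p^{3}_{2,g-1})$. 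You correctly sense that Lemmas \ref{pi} and \ref{p122} are the inputs, but ``pinning down which instance of Lemma \ref{jcs}(ii) works'' is not bookkeeping---it is the proof, and the instances you propose cannot do it; nor does the ``squares over valencies'' conversion from Lemma \ref{pi} appear here, since the argument works with the raw products.

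Two smaller points. In the base case you assert $p^{0}_{1,0}=1$; in fact $p^{0}_{1,0}=0$ (no vertex lies in $R_1(x)\cap R_0(x)$), and with your value the bound $p^{1}_{1,1}\geq1$ would not even close that case, whereas with the correct value it does, exactly as in the paper (which deduces $p^{1}_{1,1}\neq0$ from \cite[Lemma 2.1]{ln} and Theorem \ref{drdg}). Finally, the reduction to $d=g-1$ is harmless but not how the paper handles this particular lemma; the essential point is only that $p^{1}_{1,1}\neq0$ and that the standing hypothesis $k_1>k_g+1$ is in force.
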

\begin{proof}
According to  \cite[Lemma 2.1]{ln} and Lemma \ref{drdg}, we have $p^{1}_{1,1}\neq0$, and so $0=p^{0}_{1,0}<p^{1}_{1,1}$. Thus, the case $i=0$ is valid. Now suppose $i>0$. It follows that $g\geq4$. By setting $(h,m,e,l)=(i+1,1,i,g-1)$ in Lemma \ref{jcs} (ii), we obtain $$\sum^{g-1}_{r=i}p^{r}_{i,g-1}p^{i+1}_{1,r}=\sum_{t=1}^{i+1}p^{t}_{1,i}p^{i+1}_{t,g-1}.$$
In view of Lemma \ref{jcs} (i), one has $p^{g-1}_{i,g-1}=p^{1}_{1,i}$ and $$k_2k_{i+1}p^{g-2}_{i,g-1}p^{i+1}_{1,g-2}=k_{1}^{2}p^{1}_{2,i}p^{1}_{2,i+1}\neq0$$ from Lemma \ref{p122}, which imply
\begin{align}
&\sum^{g-3}_{r=i}p^{r}_{i,g-1}p^{i+1}_{1,r}<\sum_{t=2}^{i+1}p^{t}_{1,i}p^{i+1}_{t,g-1}.\label{eq3}
\end{align}
According to Lemma \ref{jcs} (i), we have $p^{i}_{i,g-1}=p^{i}_{1,i}$. If $i=1$, from \eqref{eq3}, then $p^{1}_{1,1}p^{2}_{1,1}<p^{2}_{1,1}p^{2}_{1,2}$, which implies $p^{1}_{1,1}<p^{2}_{1,2}$. By Lemma \ref{pi}, one gets $p^{2}_{1,3}<p^{2}_{3,g-1}$, and so $p^{3}_{2,g-1}<p^{3}_{1,2}$ from Lemma \ref{jcs} (i). If $i=2$, then $$p^{2}_{1,2}(p^{3}_{1,2}-p^{3}_{2,g-1})<p^{3}_{1,3}(p^{3}_{1,2}-p^{3}_{2,g-1})$$  from \eqref{eq3}, which implies $p^{2}_{1,2}<p^{3}_{1,3}$. This completes the proof of this lemma.
\end{proof}

\begin{lemma}\label{p1ij}
If $0<i\leq(g+1)/2$, then $p^{1}_{i,i}\neq0$.
\end{lemma}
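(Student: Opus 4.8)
The plan is to reduce to the short case, rewrite $p^1_{i,i}$ as an intersection number with a small upper index, and then dispose of the finitely many surviving cases, the delicate ones by a computation with Lemma~\ref{jcs}(ii) in the spirit of the proofs of Lemmas~\ref{p122} and~\ref{ai}. By Theorem~\ref{drdg} I may assume $d=g-1$: in the long case $\Gamma$ is a lexicographic product of a short distance-regular digraph $\Gamma'$ of diameter $g-1$ by an empty graph, and since $i\le(g+1)/2<g$ the product structure shows that $p^1_{i,i}(\Gamma)$ equals $p^1_{i,i}(\Gamma')$ times the fibre size, so the statement passes to $\Gamma'$. In the short case $k_g=0$, so the hypothesis $k_1>k_g+1$ reads $k_1>1$; hence $\Gamma$ is not a directed cycle and $g\le 8$ by the theorem of Leonard and Nomura \cite{ln}, so that $i\in\{1,2,3,4\}$. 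The key reformulation is that $i^*=g-i$ for $0<i<g$ by stability, whence $k_1p^1_{i,i}=k_ip^i_{1,g-i}$ by Lemma~\ref{jcs}(i); thus $p^1_{i,i}\ne 0$ if and only if $p^i_{1,g-i}\ne 0$. I will also use repeatedly the elementary fact that $p^h_{1,h-1}\ne 0$ for $1\le h\le d$, which is clear from the first interior vertex of a shortest path.

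With these in hand I would split on the value of $i$. For $i=1$, $p^1_{1,1}\ne 0$ by \cite[Lemma 2.1]{ln} (here $k_1>1$ is used); for $i=2$, $p^1_{2,2}\ne 0$ is Lemma~\ref{p122}. If $g=2i-1$ then $g-i=i-1$, so $p^i_{1,g-i}=p^i_{1,i-1}\ne 0$ by the shortest-path fact, settling $(g,i)=(5,3)$ and $(7,4)$. If $g=2i$ then $g-i=i$, and $p^i_{1,i}\ne 0$ follows by chaining the strict inequalities of Lemma~\ref{ai}, $0=p^0_{1,0}<p^1_{1,1}<\cdots<p^i_{1,i}$, which is legitimate as long as $i\le 3$ (each step $p^{j-1}_{1,j-1}<p^j_{1,j}$ requires $j-1\le 2$); this settles $(g,i)=(4,2)$ and $(6,3)$. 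The cases that remain are $(g,i)\in\{(7,3),(8,3),(8,4)\}$, i.e.\ I still have to show $p^3_{1,4}\ne 0$ when $g=7$ and $p^3_{1,5}\ne 0$, $p^4_{1,4}\ne 0$ when $g=8$. For each of these I would choose a suitable quadruple $(h,m,e,l)$, apply Lemma~\ref{jcs}(ii), keep on one side one or two summands already known to be nonzero (from Lemmas~\ref{p122}, \ref{pi}, \ref{ai} and the shortest-path fact), cancel a matching pair of summands using stability and Lemma~\ref{jcs}(i), and read off that the summand carrying the desired intersection number must be positive — exactly the mechanism of the proofs of Lemmas~\ref{p122} and~\ref{ai}.

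The main obstacle is precisely this last step: finding the right instances of Lemma~\ref{jcs}(ii) for the three remaining configurations with $g\in\{7,8\}$, and carefully tracking which intersection numbers vanish for degree or triangle-inequality reasons. Everything else — the reduction to the short case, the translation via Lemma~\ref{jcs}(i), and the small cases — should be routine.
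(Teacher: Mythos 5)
Your reduction to the short case, the translation $k_1p^{1}_{i,i}=k_ip^{i}_{1,g-i}$ via stability and Lemma~\ref{jcs}~(i), and the cases $i\in\{1,2\}$ together with $(g,i)\in\{(4,2),(5,3),(6,3),(7,4)\}$ are all correct, and they essentially match the paper's treatment (the paper settles $(7,4)$ by $p^{1}_{4,4}=p^{6}_{3,3}\neq0$, which is the same shortest-path/midpoint observation you use). The problem is that your proposal stops exactly where the real work lies: for $(g,i)\in\{(7,3),(8,3),(8,4)\}$ you only announce that a suitable quadruple in Lemma~\ref{jcs}~(ii) should exist, without exhibiting one, and you yourself identify this as the main obstacle. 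For $i=3$ with $g\in\{7,8\}$ your plan is viable and is what the paper does: it sets $(h,m,e,l)=(g-3,1,2,g-1)$, cancels the term $p^{g-1}_{2,g-1}p^{g-3}_{1,g-1}$ against $p^{1}_{1,2}p^{g-3}_{1,g-1}$ using $p^{g-1}_{2,g-1}=p^{1}_{1,2}$, and then uses $p^{2}_{1,2}<p^{3}_{1,3}$ (Lemma~\ref{ai}) together with $p^{g-3}_{2,g-1}\neq0$ (from Lemma~\ref{p122} and Lemma~\ref{jcs}~(i)) to force $p^{g-3}_{3,g-1}\neq0$, hence $p^{1}_{3,3}\neq0$. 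So that part is a fillable gap, though still a gap, since the specific instance and the cancellation are the whole content.

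The case $(g,i)=(8,4)$ is the serious omission: the paper does not obtain it from Lemma~\ref{jcs}~(ii) at all. It assumes $p^{1}_{4,4}=0$, picks $(x,y)\in\Gamma_3$, $z\in P_{1,3}(x,y)$, $w\in P_{4,7}(x,y)$ and $w'\in P_{1,2}(x,y)$, and from $p^{1}_{4,4}=0$ deduces the containments $P_{1,3}(x,y)\cup\{w'\}\subseteq P_{1,3}(x,w)$ and $P_{4,7}(x,y)\subseteq P_{3,7}(z,y)$, giving $p^{3}_{1,3}<p^{4}_{1,3}$ and $p^{3}_{4,7}\leq p^{3}_{3,7}=p^{3}_{1,3}$; combining these via Lemma~\ref{jcs}~(i) yields $k_4<k_3$, contradicting the valency monotonicity in \cite[Lemma 1.1 (c)]{ln}. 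This is a combinatorial containment argument plus an external inequality on valencies, not an instance of the identity-manipulation mechanism you propose, and there is no indication that such an instance exists for this configuration. So while your framework and the easy cases are sound, the proposal as written has a genuine missing idea for $(8,4)$ and unexecuted computations for $(7,3)$ and $(8,3)$.
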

\begin{proof}
In view of Theorem \ref{drdg}, we only need to consider the case $d=g-1$. It follows that $k_1>1$.
According to \cite[Theorem 3.3]{ln}, we have $g\leq8$, which implies $i\leq4$ since $i\leq(g+1)/2$. The case of $i=1$ is valid from \cite[Lemma 2.1]{ln}, and the case of $i=2$ is also valid from Lemma \ref{p122}.

Suppose $i=3$. By setting $(h,m,e,l)=(g-3,1,2,g-1)$ in Lemma \ref{jcs} (ii), we have $$p^{g-3}_{2,g-1}p^{g-3}_{1,g-3}+p^{g-1}_{2,g-1}p^{g-3}_{1,g-1}\leq p^{1}_{1,2}p^{g-3}_{1,g-1}+p^{2}_{1,2}p^{g-3}_{2,g-1}+p^{3}_{1,2}p^{g-3}_{3,g-1}.$$ In view of Lemma \ref{jcs} (i), we get $$p^{g-1}_{2,g-1}=p^{1}_{1,2}~\mbox{and}~p^{g-3}_{1,g-3}=p^{3}_{1,3},$$ which imply
\begin{align}
&p^{g-3}_{2,g-1}p^{3}_{1,3}\leq p^{2}_{1,2}p^{g-3}_{2,g-1}+p^{3}_{1,2}p^{g-3}_{3,g-1}.\label{eq4}
\end{align}
According to Lemma \ref{jcs} (i), we obtain $$k_3p^{g-3}_{2,g-1}=k_{1}p^{1}_{2,3}\neq0$$ from Lemma \ref{p122}. By Lemma \ref{ai}, one has $p^{2}_{1,2}<p^{3}_{1,3}$. It follows from \eqref{eq4} that $p^{g-3}_{3,g-1}\neq0$. Lemma \ref{jcs} (i) implies $p^{1}_{3,3}\neq0$.

Suppose $i=4$. Since $i\leq(g+1)/2$, we have $g=7$ or $8$. If $g=7$, then $p^{1}_{4,4}=p^{6}_{3,3}\neq0$ from Lemma \ref{jcs} (i). Now we consider the case $g=8$. Assume the contrary, namely, $p^{1}_{4,4}=0$. Pick $(x,y)\in\Gamma_3$. According to Lemma \ref{ai}, we have $p^{3}_{1,3}\neq0$. By Lemma \ref{jcs} (i), one gets $$k_3p^{3}_{4,g-1}=k_4p^{4}_{1,3}\neq0.$$ Let $z\in P_{1,3}(x,y)$ and $w\in P_{4,g-1}(x,y)$. Since $p^{3}_{1,2}\neq0$, there exists $w'\in P_{1,2}(x,y)$. The fact $y\in P_{2,1}(w',w)$ implies $w'\in P_{1,3}(x,w)$. By $p^{1}_{4,4}=0$, we have $z\in P_{1,3}(x,w)$, and so $w\in P_{3,g-1}(z,y)$. Since $z\in P_{1,3}(x,y)$ and $w\in P_{4,g-1}(x,y)$ were arbitrary, we have $$P_{1,3}(x,y)\cup\{w'\}\subseteq P_{1,3}(x,w)~ \mbox{and}~P_{4,g-1}(x,y)\subseteq P_{3,g-1}(z,y).$$ Then $p^{3}_{1,3}<p^{4}_{1,3}$ and $p^{3}_{4,g-1}\leq p^{3}_{3,g-1}$. By Lemma \ref{jcs} (i), we get $p^{3}_{3,g-1}=p^{3}_{1,3}$, and so $$k_4p^{3}_{1,3}<k_4p^{4}_{1,3}=k_3p^{3}_{4,g-1}\leq k_3p^{3}_{1,3}.$$ It follows that $k_4<k_3$, contrary to \cite[Lemma 1.1 (c)]{ln}.
\end{proof}

\begin{lemma}\label{p322}
If $1\leq i\leq\min\{4,g-1\}$, then $p^{i}_{2,2}\neq0$.
\end{lemma}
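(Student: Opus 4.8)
The plan is to mimic the inductive structure already used for Lemma~\ref{p122} and Lemma~\ref{p1ij}: by Theorem~\ref{drdg} we may assume $d=g-1$, so $k_g=0$ and $k_1>1$. By \cite[Theorem 3.3]{ln} we have $g\le 8$ (recall $\Gamma$ is not a directed cycle since $k_1>k_g+1=1$), hence $i\le 4$ in all cases. The case $i=1$ is immediate since $p^1_{2,2}$ counts, for an arc $(x,y)$, the vertices $z$ with $\wz\partial(x,z),\wz\partial(z,y)$ both of ``distance-$2$'' type; if this were $0$ one could argue directly using stability (two out-neighbours at distance $2$ from each other would force a short circuit), or more cleanly derive it from Lemma~\ref{jcs}~(i) together with Lemma~\ref{p122}. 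I would in fact try to get $i=1$ for free from Lemma~\ref{p122} by the valency identity $k_1 p^1_{2,2}=k_2 p^2_{2,1}=k_2p^2_{1,2}$ and $p^2_{1,2}\ne 0$ (this last from the $i=2$ instance of Lemma~\ref{p122}, since $2\le g-1$).

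Next I would handle $i=2,3,4$ in turn. The engine is Lemma~\ref{jcs}~(ii): set $(h,m,e,l)=(g-i,\,2,\,2,\,g-1)$ (or a nearby choice), expand both sides, and use Lemma~\ref{jcs}~(i) to rewrite the ``diagonal'' terms $p^{g-i}_{2,g-i},\,p^{g-1}_{2,g-1}$ in terms of the small-index numbers $p^{i}_{2,i},\,p^1_{2,2}$, exactly as in the proofs of Lemma~\ref{p122} and Lemma~\ref{p1ij}. After cancelling the terms that match by stability, one should be left with an inequality of the shape
\[
p^{g-i}_{2,g-1}\,p^{i}_{2,i}\;\le\;\sum_{t}p^{t}_{2,2}\,p^{g-i}_{t,g-1},
\]
in which $p^{g-i}_{2,g-1}\ne 0$ (via Lemma~\ref{jcs}~(i) and $p^1_{2,i}\ne 0$ from Lemma~\ref{p122}, using $i\le g-1$) and $p^i_{2,i}\ne 0$ can be guaranteed by an auxiliary monotonicity fact in the spirit of Lemma~\ref{ai} — indeed, combining the chain $p^0_{1,1}<p^1_{1,1}<p^2_{1,2}<p^3_{1,3}$ from Lemma~\ref{ai} with Lemma~\ref{jcs}~(i) should yield $p^i_{2,i}>0$ for the relevant $i$. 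Then positivity of the left side forces some $p^t_{2,2}\ne 0$ with $t$ small enough to feed the next inductive step, and one climbs from $i$ to $i+1$ just as $p^1_{3,3}\ne0$ was bootstrapped to the $g=8$ subcase in Lemma~\ref{p1ij}.

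For the top cases $i=4$ (so $g\in\{7,8\}$) and possibly $i=3$ with $g$ small, the clean identity route may fail and I expect to fall back on the combinatorial double-counting argument used at the end of Lemma~\ref{p1ij}: assume $p^i_{2,2}=0$ for contradiction, pick $(x,y)\in\Gamma_j$ for a suitable $j$, and show that out-neighbourhoods of $x$ of two different ``distance types'' must then coincide, producing a strict inequality between two intersection numbers that collides with $p^i_{\cdot,\cdot}=p^i_{\cdot',\cdot'}$ (stability) or with the valency monotonicity $k_4<k_3$ forbidden by \cite[Lemma 1.1~(c)]{ln}. When $g=7$ one can also shortcut via $p^i_{2,2}=p^{g-i}_{2,2}$ (from $R_2^{\rm T}=R_2$... — here I would instead use Lemma~\ref{jcs}~(i): $k_i p^i_{2,2}=k_2 p^2_{i,2^*}=k_2p^2_{2,i}$, reducing $i=4$ to $p^2_{2,3}\ne0$, an already-known instance).

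\textbf{Main obstacle.} The delicate point is not the algebra of Lemma~\ref{jcs} but ensuring, at each level $i$, that the ``diagonal'' coefficient multiplying the unknown (here $p^{g-i}_{2,g-1}$, and on the other side $p^i_{2,i}$) is genuinely nonzero, since the whole induction collapses if it vanishes; establishing the needed monotonicity/positivity of these $p^i_{2,i}$ — essentially a second-neighbour analogue of Lemma~\ref{ai} — together with correctly tracking which $t$ survives on the right-hand side so that the induction hypothesis applies, is where the real work lies. The case $i=4$, $g=8$ will almost certainly require the explicit neighbourhood-inclusion argument rather than a formal identity, as it did in Lemma~\ref{p1ij}.
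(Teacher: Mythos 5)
Your text is a plan rather than a proof, and the places where it does commit to specific identities contain errors. The decisive cases are $i=2,3$, and for these you only say that setting $(h,m,e,l)=(g-i,2,2,g-1)$ ``or a nearby choice'' in Lemma~\ref{jcs}~(ii) ``should'' leave an inequality of a certain shape, and you then concede in your final paragraph that the two facts the argument hinges on — that $p^{i}_{2,i}\neq 0$ and that the surviving index $t$ on the right-hand side is the one the induction needs — are exactly ``where the real work lies''; neither is established, and your suggestion that $p^{i}_{2,i}>0$ follows from the chain in Lemma~\ref{ai} via Lemma~\ref{jcs}~(i) is not substantiated ($p^{i}_{2,i}$ is not linked to $p^{i}_{1,i}$ by that identity). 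Moreover, several of your uses of Lemma~\ref{jcs}~(i) forget that $2^{*}=g-2\neq 2$: the correct identity is $k_1p^{1}_{2,2}=k_2p^{2}_{1,g-2}$, not $k_2p^{2}_{1,2}$, and likewise $k_ip^{i}_{2,2}=k_2p^{2}_{i,g-2}$, so the claimed $g=7$ shortcut $k_ip^{i}_{2,2}=k_2p^{2}_{2,i}$ is false. For $i=1$ no identity is needed at all: the $i=2$ instance of Lemma~\ref{p122} literally states $p^{1}_{2,2}\neq 0$. You also overestimate $i=4$: when $i=4$ is in range one has $4\leq d$, and the midpoint of a geodesic of length $4$ gives $p^{4}_{2,2}\neq 0$ at once, so no neighbourhood-inclusion argument is required.

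For comparison, the paper's proof is much shorter than what you anticipate: it disposes of $i=4$ as above and of $i=1$ by Lemma~\ref{p122}, and for $i\in\{2,3\}$ it substitutes $(h,m,e,l)=(i,2,1,1)$ into Lemma~\ref{jcs}~(ii). With this choice the left-hand side collapses to just two terms, $p^{1}_{1,1}p^{i}_{2,1}+p^{2}_{1,1}p^{i}_{2,2}$, since $p^{r}_{1,1}=0$ unless $r\in\{1,2\}$, while keeping only the $t=2$ term on the right gives $p^{1}_{1,1}p^{i}_{2,1}+p^{2}_{1,1}p^{i}_{2,2}\geq p^{2}_{1,2}p^{i}_{2,1}$. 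Since $p^{1}_{1,1}<p^{2}_{1,2}$ by Lemma~\ref{ai} and $p^{i}_{2,1}\neq 0$ (for $i=3$ by composing geodesics; for $i=2$ because $p^{2}_{2,1}=p^{2}_{1,2}>p^{1}_{1,1}\geq 0$), this forces $p^{2}_{1,1}p^{i}_{2,2}>0$. So the missing ingredient in your outline is precisely this choice of substitution, which makes all the bookkeeping you flag as the main obstacle disappear; as written, your proposal does not close the gap for $i=2,3$.
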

\begin{proof}
It is obvious that $p^4_{2,2}\neq0$. According to Lemma \ref{p122}, we have $p^{1}_{2,2}\neq0$. We only need to consider the case $i\in\{2,3\}$. By setting $(h,m,e,l)=(i,2,1,1)$ in Lemma \ref{jcs} (ii), we get $$p^{1}_{1,1}p^{i}_{2,1}+p^{2}_{1,1}p^{i}_{2,2}\geq p^{2}_{1,2}p^{i}_{2,1}.$$ In view of Lemma \ref{ai}, one gets $p^{1}_{1,1}<p^{2}_{1,2}$, and so $p^{i}_{2,1}\neq0$. Thus, $p^{i}_{2,2}\neq0$.
%
%
%
\end{proof}

\section{Proof of Theorem \ref{main}}

With notations in Theorem \ref{main}, we shall give a proof of this theorem in this section. Before that, we need some auxiliary facts and lemmas.



\begin{fact}\label{sub}
For all $1\leq i\leq d$, $R_{i}\subseteq A\Gamma$ or $R_{i}\cap A\Gamma=\emptyset$.
\end{fact}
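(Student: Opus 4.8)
Fact \ref{sub} asserts that each relation $R_i$ (for $1\le i\le d$) of the attached scheme is either entirely contained in the arc set $A\Gamma$ or entirely disjoint from it. The plan is to exploit the $P$-polynomial structure: since $\mathfrak X$ is $P$-polynomial with respect to $R_0,R_1,\dots,R_d$, we have $R_1\subseteq A\Gamma$ by the choice of $\Gamma$ (the digraph $\Gamma$ under study is $(X,R_1)$ or, more precisely, its relations come from the two-way distance partition, and $R_1$ is an arc-relation), and more importantly the adjacency matrix $A_i$ of $R_i$ is a polynomial $v_i(A_1)$ of degree $i$ in $A_1$. So $(x,y)\in R_i$ forces $\partial_\Gamma(x,y)\le i$, and conversely the $P$-polynomial ordering is designed so that the $R_i$ refine the distance partition in a controlled way.

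First I would make precise the relationship between the ordering $R_0,\dots,R_d$ and the distance function of $\Gamma=(X,R_1)$. The key observation is that, because $A_i=v_i(A_1)$ with $\deg v_i=i$, the relation $R_i$ is contained in the union $\Gamma_0\cup\Gamma_1\cup\cdots\cup\Gamma_i$ of "distance at most $i$" pairs, while $R_0\cup\cdots\cup R_{i-1}$ is likewise contained in that union for index $i-1$; comparing, one sees $R_i$ must meet $\Gamma_i$, i.e. the ordering is compatible with graph distance in $(X,R_1)$. Now fix $i$ with $1\le i\le d$ and suppose $R_i\cap A\Gamma\ne\emptyset$, say $(x,y)\in R_i$ with $(x,y)\in A\Gamma$, so $\partial_\Gamma(x,y)=1$. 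Then the pair $(x,y)$ lies in both $R_i$ and the distance-one set $\Gamma_1$. Since distinct $R_j$'s are disjoint and $\Gamma_1$ is a union of some of the $R_j$'s (as $\Gamma$ is weakly distance-regular, the two-way distance partition refines... wait, one must be careful: the $R_j$ here are the two-way-distance relations $\Gamma_{\tilde j}$, and $A\Gamma$ is a union of those $\Gamma_{\tilde j}$ with first coordinate $1$). Hence $R_i\cap A\Gamma\ne\emptyset$ together with the fact that $A\Gamma$ is a union of scheme relations forces $R_i\subseteq A\Gamma$.

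So the cleanest route is this: recall that $A\Gamma=\bigcup_{\tilde j:\, j_1=1}\Gamma_{\tilde j}$ is a union of relations of the attached scheme $\mathfrak X(\Gamma)$, and that the $P$-polynomial relations $R_0,R_1,\dots,R_d$ are exactly these relations $\Gamma_{\tilde j}$ listed in a particular order. Since the $R_i$ partition $X\times X$ and $A\Gamma$ is a union of some of them, any $R_i$ either is one of the pieces making up $A\Gamma$ (hence $R_i\subseteq A\Gamma$) or is not (hence $R_i\cap A\Gamma=\emptyset$). Strictly this is almost immediate once one has identified the $R_i$ with the $\Gamma_{\tilde j}$; the substantive content is only that $A\Gamma$ really is a union of scheme relations, which follows because $\mathfrak X(\Gamma)$ being an association scheme (Definition of weakly distance-regular) means the partition $\{\Gamma_{\tilde i}\}$ is defined by the two-way distance, and $(x,y)\in A\Gamma$ depends only on $\tilde\partial_\Gamma(x,y)$ in the sense that $(x,y)\in A\Gamma \iff \partial_\Gamma(x,y)=1 \iff$ the first coordinate of $\tilde\partial_\Gamma(x,y)$ is $1$.

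The main obstacle, and the only place needing care, is the passage between the two indexings: the theorem's hypothesis presents $\mathfrak X$ abstractly as $(X,\{R_i\}_{i=0}^d)$ with a $P$-polynomial ordering, while $A\Gamma$ is defined in terms of the digraph $\Gamma$ whose attached scheme this is. I would spell out that $\Gamma$ is (up to isomorphism, which is what the theorem is proving) the digraph $(X,R_1)$ — or rather, that $R_1\subseteq A\Gamma$ because $A_1$ generates the whole Bose--Mesner algebra so the digraph $(X,R_1)$ is strongly connected with the same vertex set, and $A\Gamma$, being a union of relations of $\mathfrak X(\Gamma)=\mathfrak X$, must be a union of some $R_i$'s. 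Once that is granted, Fact \ref{sub} is just the statement that a union of some of the blocks $R_0,\dots,R_d$ either contains a given block $R_i$ entirely or is disjoint from it, which is trivially true. I would therefore keep the argument to a short paragraph: identify $A\Gamma$ as a union of scheme relations, invoke disjointness of the $R_i$, conclude.
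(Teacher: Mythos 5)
Your final argument is correct and is precisely the (unstated) justification behind the paper's Fact~\ref{sub}: the relations $R_i$ are exactly the two-way-distance classes $\Gamma_{\tilde{j}}$ of $\Gamma$, the arc set $A\Gamma$ is the union of those classes whose first coordinate equals $1$, and the classes are pairwise disjoint, so each $R_i$ is either contained in $A\Gamma$ or disjoint from it. Do excise, however, the side claims that $R_1\subseteq A\Gamma$ and that $\Gamma$ is $(X,R_1)$: both are false in general (e.g.\ in case (ii) of Theorem~\ref{main} one has $A\Gamma=R_2$ and $R_1\cap A\Gamma=\emptyset$), although nothing in your final, correct argument actually uses them.
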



\begin{fact}\label{zhuanzhi}
The digraph $(X, (A\Gamma)^{\rm T})$ is   weakly distance-regular, and has the same attached scheme with  $\Gamma$.
\end{fact}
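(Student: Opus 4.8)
The plan is to exploit the elementary fact that reversing all arcs of a digraph reverses all distances. Write $\Gamma^{\rm T}:=(X,(A\Gamma)^{\rm T})$. First I would observe that $(w_{0},w_{1},\ldots,w_{r})$ is a path from $x$ to $y$ in $\Gamma^{\rm T}$ if and only if $(w_{r},w_{r-1},\ldots,w_{0})$ is a path from $y$ to $x$ in $\Gamma$; applying this to shortest paths yields $\partial_{\Gamma^{\rm T}}(x,y)=\partial_{\Gamma}(y,x)$ for all $x,y\in X$. In particular $\Gamma^{\rm T}$ is finite, simple, strongly connected, and not undirected (the last because $(A\Gamma)^{\rm T}$ is symmetric if and only if $A\Gamma$ is), so it is a digraph in the sense of this paper.

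Next I would translate this into a statement about two-way distances: for all $x,y\in X$ one has $\wz\partial_{\Gamma^{\rm T}}(x,y)=(\partial_{\Gamma}(y,x),\partial_{\Gamma}(x,y))$. Hence, writing $(a,b)':=(b,a)$, the map $\wz i\mapsto\wz i'$ is a bijection from $\wz\partial(\Gamma)$ onto $\wz\partial(\Gamma^{\rm T})$, and for each $\wz i\in\wz\partial(\Gamma)$ we obtain the equality of subsets of $X\times X$
\[
(\Gamma^{\rm T})_{\wz i'}=\Gamma_{\wz i}.
\]

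It then follows that the two pairs $\mathfrak{X}(\Gamma^{\rm T})=(X,\{(\Gamma^{\rm T})_{\wz j}\}_{\wz j\in\wz\partial(\Gamma^{\rm T})})$ and $\mathfrak{X}(\Gamma)=(X,\{\Gamma_{\wz i}\}_{\wz i\in\wz\partial(\Gamma)})$ have the same vertex set and exactly the same family of relations, differing only in how those relations are indexed. Since $\mathfrak{X}(\Gamma)$ is an association scheme by hypothesis, so is $\mathfrak{X}(\Gamma^{\rm T})$; therefore $\Gamma^{\rm T}$ is weakly distance-regular and its attached scheme is literally $\mathfrak{X}(\Gamma)$.

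There is no serious obstacle here; the only point requiring a little care is to conclude an \emph{actual equality} of attached schemes rather than a mere isomorphism, which is why I would make the bijection $\wz i\mapsto\wz i'$ and the identity $(\Gamma^{\rm T})_{\wz i'}=\Gamma_{\wz i}$ explicit. It is worth remarking that this fact uses neither the $P$-polynomial hypothesis of Theorem \ref{main} nor Fact \ref{sub}.
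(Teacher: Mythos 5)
Your argument is correct and is exactly the (routine) justification the paper leaves implicit: the paper states this as a Fact without proof, relying on the observation that arc reversal sends $\Gamma_{\wz i}$ to $\Gamma_{\wz i'}$ with $\wz i'=(b,a)$ for $\wz i=(a,b)$, so the relation partition, and hence the attached scheme, is unchanged. Your explicit identity $(\Gamma^{\rm T})_{\wz i'}=\Gamma_{\wz i}$ captures this cleanly, and your remark that one gets equality of schemes (not just isomorphism) is the right point to emphasize.
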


\begin{lemma}\label{short and long}
Let $\Delta$ be a weakly distance-regular digraph of girth $g'$. Suppose that $\mathfrak{X}(\Delta)$ is a $P$-polynomial association scheme of girth $d+1$ with respect to the ordering $R_0,R_1,\ldots,R_d$. The following hold:
\begin{itemize}
\item [{\rm(i)}] If $2\mid d$ or $R_{(d+1)/2}\nsubseteq A\Delta,$ then a lexicographic product of $\Delta$ by a complete graph is a $P$-polynomial weakly distance-regular digraph;

\item [{\rm(ii)}] If $(g',g')\notin\wz{\partial}(\Delta)$, then a lexicographic product of $\Delta$ by an empty graph is a $P$-polynomial weakly distance-regular digraph.
\end{itemize}
\end{lemma}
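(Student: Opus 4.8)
The statement concerns lexicographic products, so the first step is to fix notation: write $\Delta \otimes K_m$ (respectively $\Delta \otimes \overline{K}_m$) for the lexicographic product of $\Delta$ by the complete graph $K_m$ (respectively the empty graph $\overline{K}_m$) on $m$ vertices, whose vertex set is $V\Delta \times \{1,\ldots,m\}$ and in which $((x,a),(y,b))$ is an arc precisely when $(x,y)\in A\Delta$, or $x=y$ and $a\neq b$ (for $K_m$), or never when $x=y$ (for $\overline{K}_m$). The key observation is that distances in the product are governed by distances in $\Delta$: for $x\neq y$ one has $\partial((x,a),(y,b))=\partial_\Delta(x,y)$, and this identity forces the two-way distance classes of the product to refine cleanly over those of $\Delta$. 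The plan is to show that under the stated hypotheses the relations of the product are exactly $\{R_{\tilde i}\times(\text{diagonal or all of }\{1,\dots,m\}^2)\}$ together with one extra relation coming from $x=y,\ a\neq b$, and that this collection forms a $P$-polynomial association scheme.

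\medskip

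\textbf{Step 1: identify the girth and the $x=y$ class.} For part (i), I would first check that the new arc relation (pairs $((x,a),(x,b))$ with $a\neq b$, present in $\Delta\otimes K_m$) has two-way distance $(g',g')$ if the new arcs lie on a shortest circuit through a vertex of $\Delta$; the hypothesis $2\mid d$ or $R_{(d+1)/2}\nsubseteq A\Delta$ is exactly what guarantees that a vertex $x$ of $\Delta$ admits a $y$ with $\widetilde\partial_\Delta(x,y)=(i,g'-i)$ for a suitable $i$ so that $x$ lies on a circuit of length $g'$, hence $\partial((x,a),(x,b))=\partial((x,b),(x,a))=g'$. In the $\Delta\otimes\overline K_m$ case of part (ii), the hypothesis $(g',g')\notin\widetilde\partial(\Delta)$ is what ensures that no two distinct vertices $x,y$ of $\Delta$ have $\widetilde\partial_\Delta(x,y)=(g',g')$; combined with $x=y$ giving no arc, the fibre-diagonal (pairs $((x,a),(x,b))$) must then fall into a relation whose two-way distance is some $(j,g'-j)$ already present, and the product remains strongly connected with the same diameter. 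This bookkeeping of which two-way distances occur is routine but needs to be done carefully.

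\medskip

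\textbf{Step 2: verify the association scheme axioms and compute intersection numbers.} Having a candidate partition of $(V\Delta\times[m])^2$, I would verify axioms (i)--(iv) of Section 2 directly. Axioms (i)--(iii) are immediate from the product structure and from $\mathfrak X(\Delta)$ being a scheme. For axiom (iv), given a pair in a product relation one counts intermediate vertices $(z,c)$: the count over the $z$-coordinate is an intersection number $p^h_{i,j}$ of $\mathfrak X(\Delta)$, and the count over the $c$-coordinate is a simple binomial-type expression depending only on $m$ and on whether the relevant fibres are "diagonal" or "full". Multiplying these gives a constant, so the product is weakly distance-regular. This is the most computation-heavy step but it is entirely mechanical once the relation partition is pinned down.

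\medskip

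\textbf{Step 3: transfer the $P$-polynomial property.} Finally I would show the product scheme is $P$-polynomial with respect to the ordering induced by $R_0,R_1,\ldots,R_d$ (with the extra $x=y$ relation inserted in the appropriate position, typically as the last class or paired with $R_{(d+1)/2}$). Since $A_i=v_i(A_1)$ in $\mathfrak X(\Delta)$, and the adjacency matrix of the product's "$R_1$" relation is $A_1\otimes J$ or $A_1\otimes J + I\otimes(J-I)$ depending on the case, I would express each product adjacency matrix as a polynomial in this matrix, using that $\{A_1\otimes J,\ I\otimes(J-I),\ I\otimes I\}$ generate the relevant subalgebra and that $J^2=mJ$, $(J-I)^2=(m-2)(J-I)+(m-1)I$ give the needed closure. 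The main obstacle is Step 1: correctly reading off, from the precise hypotheses on $d$ and on $R_{(d+1)/2}$ versus $A\Delta$ (and on $(g',g')$), exactly which two-way distance class the fibre pairs land in, since a misidentification there breaks both the scheme axioms and the girth claim. Once that is settled, Steps 2 and 3 are standard lexicographic-product computations of the kind already used in \cite{KSW03} and \cite{YYF}.
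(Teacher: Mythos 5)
Your Step~1 --- which you yourself flag as the critical bookkeeping --- misidentifies the classes in both cases. In the lexicographic product of $\Delta$ by a complete graph, two distinct vertices $(x,a),(x,b)$ of the same fibre are adjacent in \emph{both} directions, so their two-way distance is $(1,1)$, not $(g',g')$ as you assert (your own phrase ``new arc relation'' already contradicts the value $(g',g')$). Consequently the hypothesis ``$2\mid d$ or $R_{(d+1)/2}\nsubseteq A\Delta$'' has nothing to do with vertices of $\Delta$ lying on $g'$-circuits; its actual role is to force $(1,1)\notin\wz{\partial}(\Delta)$: since the scheme is non-symmetric $P$-polynomial of girth $d+1$, stability gives $R_i^{\rm T}=R_{d+1-i}$, so the only possibly symmetric relation among $R_1,\ldots,R_d$ is $R_{(d+1)/2}$ (which requires $d$ odd), and a class $\Delta_{(1,1)}$ would be a symmetric relation contained in $A\Delta$ (Fact~\ref{sub}), which the hypothesis rules out. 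It is precisely $(1,1)\notin\wz{\partial}(\Delta)$ that makes the fibre pairs a \emph{new} class and yields weak distance-regularity of the complete-graph product. Dually, in part (ii) your conclusion is inverted: in the product by an empty graph the fibre pairs have two-way distance $(g',g')$ (every vertex lies on a $g'$-circuit because the relation $\Delta_{(1,g'-1)}$ has positive valency), and the hypothesis $(g',g')\notin\wz{\partial}(\Delta)$ guarantees that this is a \emph{new} class rather than a merger with an existing one --- a merger is exactly what would destroy axiom (iv); moreover the diameter of this product increases, so ``same diameter'' is also false.

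Because the class identification is wrong, the partition you would feed into Steps~2 and~3 is wrong, and the verification of the scheme axioms and of the $P$-polynomial ordering cannot proceed as written. For comparison, the paper's proof does no direct computation: it deduces $(1,1)\notin\wz{\partial}(\Delta)$ from the hypotheses exactly as above, then invokes \cite[Propositions 2.4(i) and 2.6(i)]{KSW03} for weak distance-regularity of the two lexicographic products and \cite[Theorem 2.1]{AM91} for the $P$-polynomial property. If you insist on a self-contained argument, note also that Step~3 is underspecified: you would still have to prove that $A_1\otimes J+I\otimes(J-I)$ (resp.\ $A_1\otimes J$) generates the Bose--Mesner algebra of the product scheme and that the polynomials $v_i$ lift correctly; this is the content of Munemasa's theorem and does not follow routinely from $J^2=mJ$.
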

\begin{proof}
(i)~Let $\Delta'$ be a lexicographic product of $\Delta$ by a complete graph. If  $2\mid d$, then $R_i$ is non-symmetric for  $1\leq i\leq d$; if $2\nmid d$, then   $R_{(d+1)/2}$ is the unique symmetric relation. Since $2\mid d$ or $R_{(d+1)/2}\nsubseteq A\Delta,$ by Fact \ref{sub}, we have $(1,1)\notin\wz{\partial}(\Delta)$. It follows from \cite[Proposition 2.6 (i)]{KSW03} that $\Delta'$ is weakly distance-regular.  By  \cite[Theorem 2.1]{AM91},  $\Delta'$ is $P$-polynomial. Thus, (i) holds.

(ii)~Let $\Delta''$ be a lexicographic product of $\Delta$ by an empty graph. Note that $(g',g')\notin\wz{\partial}(\Delta)$. In view of \cite[Proposition 2.4 (i)]{KSW03}, $\Delta''$ is weakly distance-regular.  By \cite[Theorem 2.1]{AM91}, $\Delta''$ is $P$-polynomial. Thus, (ii) holds.
\end{proof}

\begin{lemma}\label{sur}
Each digraph in Theorem \ref{main} (i)--(vi)   is a weakly distance-regular digraph with $\mathfrak{X}$ as its attached scheme.
\end{lemma}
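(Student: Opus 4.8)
\textbf{Proof proposal for Lemma \ref{sur}.}

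The plan is to verify each of the six families (i)--(vi) in turn, in each case exhibiting the relevant digraph as a lexicographic product obtained from the underlying distance-regular digraph $(X,R_1)$ (or its transpose), and then invoking Fact \ref{zhuanzhi} together with Lemma \ref{short and long} and Theorem \ref{drdg} to conclude that the attached scheme is $\mathfrak{X}$. First I would set up notation: let $\Gamma_1=(X,R_1)$, which is a distance-regular digraph of girth $g$ whose attached scheme is $P$-polynomial of girth $d+1$ with respect to $R_0,R_1,\ldots,R_d$; by Theorem \ref{drdg} either $d=g-1$ (short type) or $d=g$ (long type, in which case $\Gamma_1$ is a lexicographic product of a short distance-regular digraph by an empty graph). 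The key structural point is that in each family the constraints imposed on $g$, $d$, and $k_1$ versus $k_g$ are exactly what is needed to land in the hypotheses of Lemma \ref{short and long}.

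The case analysis proceeds as follows. For (i), $(X,R_1)=\Gamma_1$ is distance-regular by definition, and $(X,R_{g-1})$ is its transpose via Fact \ref{zhuanzhi}; both obviously have $\mathfrak{X}$ as attached scheme since $R_{g-1}=R_1^{\rm T}$ when $g$ is the girth. For (iv), $d=g$ forces the long type, so by Theorem \ref{drdg}, $\Gamma_1$ is a lexicographic product of a short distance-regular digraph by an empty graph; here $R_g$ is the relation pairing distinct vertices in the same fibre, and $R_1\cup R_g$ is the arc set of the lexicographic product of that short digraph by a \emph{complete} graph, which is $P$-polynomial weakly distance-regular by Lemma \ref{short and long}(i) applied to the short factor (using that its scheme has the right parity or non-symmetry), with the same attached scheme $\mathfrak{X}$; the $R_{g-1}\cup R_g$ version is again the transpose. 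For (iii), $2\mid g$ means $d+1=g$ is even, hence $d$ is odd and the middle relation is $R_{(d+1)/2}$; the hypothesis combined with Fact \ref{sub} tells us whether $R_1\cup R_2$ can be realized as arcs, and $(X,R_1\cup R_2)$ is the digraph obtained by "thickening" two consecutive relations — one shows directly that its two-way distance partition coincides with $\{R_i\}$, so $\mathfrak{X}$ is its attached scheme. The cases (ii), (v), (vi) carry the extra hypothesis $k_1>k_g+1$ and $g\in\{6,8\}$ (resp. with $d=g$, resp. with $2\mid g$, $g>4$), which is precisely the setting of Lemmas \ref{p122}--\ref{p322}; there one uses those intersection-number nonvanishing results to check that $(X,R_2)$, $(X,R_2\cup R_g)$, or $(X,R_1\cup R_2\cup R_g)$ is strongly connected with the prescribed two-way distances, and then that the resulting weakly distance-regular digraph has attached scheme $\mathfrak{X}$.

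I expect the main obstacle to be the cases (ii) and (v): showing that $(X,R_2)$ (and its "thickened long" analogue) is strongly connected with girth and two-way distance structure matching $\mathfrak{X}$ requires careful bookkeeping of which $p^h_{i,j}$ vanish, and this is exactly why the restrictive hypotheses $k_1>k_g+1$ and $g\in\{6,8\}$ appear — Lemmas \ref{p322} and \ref{p1ij} are the tools, and the argument must confirm both that $R_2$ generates the whole relation algebra in the $P$-polynomial order \emph{and} that no shorter circuit than expected appears. For the lexicographic-product cases (iv) and (vi), the subtlety is checking the parity/non-symmetry condition needed to apply Lemma \ref{short and long}(i): one must verify $(1,1)\notin\wz\partial$ of the short factor, which reduces to Fact \ref{sub} plus the observation that when $d=g$ the middle relation $R_{g/2}$ (for even $g$) or the relevant relation is not contained in the arc set. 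The remaining verifications — that the two-way distance partition of each constructed digraph refines to exactly $\{R_i\}$ and that the intersection numbers match — are routine given the machinery of Section 3, so I would state them concisely rather than grinding through the computations.
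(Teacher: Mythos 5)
Your overall architecture is the same as the paper's: handle (i) directly, pass to transposes via Fact~\ref{zhuanzhi}, obtain (iv)--(vi) from (i)--(iii) as lexicographic products by a complete graph through Lemma~\ref{short and long}(i), and reduce the long type to the short type. The genuine gap is that the substance of the proof --- exactly the part you set aside as ``routine'' --- is never carried out: one must compute, for $\Delta=(X,R_2)$ with $g\in\{6,8\}$ and for $\Delta=(X,R_1\cup R_2)$ with $g\in\{4,6,8\}$, the two-way distance $\wz{\partial}_\Delta$ of \emph{every} relation $R_i$, and show that the resulting partition $\{\Delta_{\wz{i}}\}$ coincides with $\{R_i\}_{i=0}^{d}$; only then is the attached scheme of $\Delta$ equal to $\mathfrak{X}$ rather than a fusion of it. This is not bookkeeping that follows from strong connectivity: for instance, for $g=8$ and $\Delta=(X,R_2)$ one must establish $R_4=\Delta_{(2,2)}$, $R_3=\Delta_{(2,3)}$ and $R_1=\Delta_{(2,4)}$, which uses $p^{1}_{2,2}\neq0$, $p^{3}_{2,2}\neq0$, $p^{4}_{2,2}\neq0$, $p^{5}_{2,3}\neq0$, $p^{7}_{3,4}\neq0$ together with the vanishing $p^{5}_{2,2}=0$ and the lower bound $\partial_{\Delta}(y,x)\geq4$ for $(x,y)\in R_1$ forced by stability. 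Naming Lemmas~\ref{p1ij} and~\ref{p322} as ``the tools'' does not substitute for running this argument, and without it the central claim that the attached scheme is $\mathfrak{X}$ remains unproved.

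Two further specific omissions. First, to deduce the long-type versions of (ii) and (iii) (and hence (v) and (vi)) one applies Lemma~\ref{short and long}(ii), whose hypothesis is $(g',g')\notin\wz{\partial}(\Delta)$ with $g'=g/2$ the girth of the new digraph; verifying this is part of the same distance computation, and your sketch only addresses the parity/non-symmetry condition of part (i). Second, case (iii) does \emph{not} assume $k_1>k_g+1$, so the lemmas of Section~3 are unavailable when $k_1=1$; this subcase (a directed cycle, all $k_i=1$) must be settled separately, e.g.\ by \cite[Theorem 1.2]{HS04}, a step absent from your plan. Minor inaccuracies in the same direction: case (vi) does not carry the hypothesis $k_1>k_g+1$, and Fact~\ref{sub} concerns the given digraph $\Gamma$ in Theorem~\ref{main} and plays no role in verifying that the constructed digraph $(X,R_1\cup R_2)$ has attached scheme $\mathfrak{X}$. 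As written, your text is a correct plan whose decisive verifications are missing.
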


\begin{proof}
According to Lemma \ref{short and long} (i), it suffices to show that each digraph in Theorem \ref{main} (i)--(iii) is a weakly distance-regular digraph with $\mathfrak{X}$ as its attached scheme.

Since the digraphs $(X,R_1)$ and $(X,R_{g-1})$ are both distance-regular, we only need to prove that the digraphs in Theorem \ref{main} (ii) and (iii) are weakly distance-regular digraphs with the attached scheme $\mathfrak{X}$. It follows that  $2\mid g$.  Let $(x_1,x_2,\ldots,x_{g/2}=x_0)$ be a sequence of elements such that $(x_i,x_{i+1})\in R_2$ with $0\leq i\leq g/2-1$.
According to Fact \ref{zhuanzhi}, we only need to prove that $(X,R_2)$ with $k_1>k_g+1$ and $g\in\{6,8\}$, and $(X,R_1\cup R_2)$ with $2\mid g$ are weakly distance-regular digraphs with the attached scheme $\mathfrak{X}$.

%

\begin{step}
Show that $(X,R_2)$ with $k_1>k_g+1$ and $g\in\{6,8\}$ is a weakly distance-regular digraph with the attached scheme $\mathfrak{X}$.
\end{step}

Let $\Delta=(X,R_2)$. Since $(X,R_1)$ is distance-regular, $(x_1,x_2,\ldots,x_{g/2})$ is a shortest circuit in $\Delta$, and so the girth of $\Delta$ is $g/2$.  By Lemma \ref{short and long} (ii), it suffices to show that $\Delta$ is a weakly distance-regular digraph with the attached scheme $\mathfrak{X}$ and $(g/2,g/2)\notin\wz{\partial}(\Delta)$ under the assumption $d+1=g$.

Since $A\Delta=R_2$, we have  $R_2=\Delta_{(1,g/2-1)}$. Pick $(x,y)\in R_1$. Since $p^{1}_{2,2}\neq0$ from Lemma \ref{p1ij}, we get $\partial_{\Delta}(x,y)=2$.

Suppose $g=6$. By Lemma \ref{jcs} (i) and Lemma \ref{p122}, one has $p^{5}_{2,4}=p^{1}_{2,4}\neq0$. Since $p^{5}_{2,2}=0$ and $p^{4}_{2,2}\neq0$, one gets $\partial_{\Delta}(y,x)=3$, and so $R_1\subseteq\Delta_{(2,3)}.$ In view of Lemma \ref{p322}, we obtain  $p_{2,2}^{g/2}\neq0$, which implies $R_{3}=\Delta_{(2,2)}$. It follows that $R_1=\Delta_{(2,3)}$. By the distance-regularity of $(X,R_1)$, $\Delta$ is a $P$-polynomial weakly distance-regular digraph with the attached scheme  $$(X,\{\Delta_{(0,0)},\Delta_{(1,2)},\Delta_{(2,1)},\Delta_{(2,2)},\Delta_{(2,3)},\Delta_{(3,2)}\}).$$

Suppose $g=8$. Choose $(x',y')\in R_3$. According to Lemma \ref{p322}, we have $\partial_{\Delta}(x',y')=2$. Since $p^{5}_{2,3}\neq0$ and $p^{5}_{2,2}=0$, one has $\partial_{\Delta}(y',x')=3$, and so $R_3\subseteq\Delta_{(2,3)}$. The fact $(y,x)\in R_7$ implies $\partial_{\Delta}(y,x)\geq4$. Since $p^{4}_{2,2}p^{7}_{3,4}\neq0$, we obtain $\partial_{\Delta}(y,x)=4$, and so $R_1\subseteq\Delta_{(2,4)}$. The fact that $p_{2,2}^{4}\neq0$ implies $R_{4}=\Delta_{(2,2)}$. It follows that $R_3=\Delta_{(2,3)}$ and $R_1=\Delta_{(2,4)}$. By the distance-regularity of $(X,R_1)$, $\Delta$ is a $P$-polynomial weakly distance-regular digraph with the attached scheme $$(X,\{\Delta_{(0,0)},\Delta_{(1,3)},\Delta_{(2,2)},\Delta_{(2,3)},\Delta_{(2,4)},\Delta_{(3,1)},\Delta_{(3,2)},\Delta_{(4,2)}\}).$$

\begin{step}
Show that $(X,R_1\cup R_2)$ with $2\mid g$ is a weakly distance-regular digraph with the attached scheme $\mathfrak{X}$.
\end{step}

Let $\Delta=(X,R_1\cup R_2)$. Since $(X,R_1)$ is  distance-regular, $(x_1,x_2,\ldots,x_{g/2})$ is a shortest circuit in $\Delta$, and so the girth of $\Delta$ is $g/2$. By Lemma \ref{short and long} (ii), it suffices to show that $\Delta$ is a weakly distance-regular digraph with the attached scheme $\mathfrak{X}$ and $(g/2,g/2)\notin\wz{\partial}(\Delta)$ under the assumption $d+1=g$.

Note that $k_g=0$. Suppose $k_1=1$. Since $(X,R_1)$ is  distance-regular,  $(X,R_1)$ is a directed cycle, which implies $k_i=1$ for $0\leq i\leq d$. It follows from \cite[Theorem 1.2 (ii)]{HS04} that $\Delta$ is a weakly distance-regular digraph  with the attached scheme $\mathfrak{X}$ and $(g/2,g/2)\notin\wz{\partial}(\Delta)$. Now we consider the case $k_1>1$.

Since $(x_1,x_2,\ldots,x_{g/2})$ is a shortest circuit in $\Delta$, we obtain $R_2\subseteq\Delta_{(1,g/2-1)}$. The fact that $p^{2}_{1,1}\neq0$ implies $R_1=\Delta_{(1,g/2)}$, and so $R_2=\Delta_{(1,g/2-1)}$.  If $g=4$, by the distance-regularity of $(X,R_1)$, then $\Delta$ is a $P$-polynomial weakly distance-regular digraph with the attached scheme $$(X,\{\Delta_{(0,0)},\Delta_{(1,1)},\Delta_{(1,2)},\Delta_{(2,1)}\}).$$ If $g=6$, then $R_{3}=\Delta_{(2,2)}$ since $p_{1,2}^{3}\neq0$, which implies that $\Delta$ is a $P$-polynomial weakly distance-regular digraph with the attached scheme $$(X,\{\Delta_{(0,0)},\Delta_{(1,2)},\Delta_{(1,3)},\Delta_{(2,1)},\Delta_{(2,2)},\Delta_{(3,1)}\}).$$



We only need to consider the case of $g=8$. Choose $(x,y)\in R_3$. Since $p^{3}_{1,2}\neq0$, we have $\partial_{\Delta}(x,y)=2$. The fact $(y,x)\in R_5$ implies $\partial_{\Delta}(y,x)\geq3$. Since $p^{5}_{2,3}\neq0$, one obtains $\partial_{\Delta}(y,x)=3$, which implies $R_3\subseteq\Delta_{(2,3)}$. By $p^{4}_{2,2}\neq0$, one has $R_4=\Delta_{(2,2)}$. It follows that $R_3=\Delta_{(2,3)}$. By the distance-regularity of $(X,R_1)$ again, $\Delta$ is a $P$-polynomial weakly distance-regular digraph with the attached scheme $$(X,\{\Delta_{(0,0)},\Delta_{(1,3)},\Delta_{(1,4)},\Delta_{(2,2)},\Delta_{(2,3)},\Delta_{(3,1)},\Delta_{(3,2)},\Delta_{(4,1)}\}).$$

This completes the proof of this lemma.
\end{proof}

\begin{lemma}\label{arc}
If $k_1>k_g+1$, then the number of $R_i$ satisfying $R_i\subseteq A\Gamma$ with $1\leq i\leq g-1$ is at most two.
\end{lemma}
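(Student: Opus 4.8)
\textbf{Proof proposal for Lemma \ref{arc}.}

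The plan is to argue by contradiction: suppose at least three relations $R_i$ with $1 \le i \le g-1$ lie inside $A\Gamma$. By Theorem \ref{drdg} we may reduce to the short type $d = g-1$ (the lexicographic structure in the long case means arcs come only from the diameter-$(g-1)$ quotient, so nothing new happens), and under the hypothesis $k_1 > k_g + 1$ all the auxiliary lemmas of Section 3 are available. Since $(X,R_1)$ is a distance-regular digraph and $\Gamma$ has the same attached scheme, $A\Gamma$ is a union of relations $R_i$; the indices $i$ with $R_i \subseteq A\Gamma$ form a subset $S \subseteq \{1,\dots,g-1\}$, and the key constraint is that a single arc set must be ``$P$-polynomial-compatible'': walking along arcs of $\Gamma$ realizes the ordinary distance in $\Gamma$, and this must be consistent with the stability relation $\partial(x,y) + \partial(y,x) = g$ that holds for the original digraph $(X,R_1)$.

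The heart of the matter is a girth/consistency computation. First I would observe that $1 \in S$ is forced, or more precisely that whichever $R_i \subseteq A\Gamma$ has the smallest index $i$ must behave like ``distance $1$'' in $\Gamma$; combining the smallest and largest such indices with Lemma \ref{p122}, Lemma \ref{p1ij} and Lemma \ref{p322} (nonvanishing of $p^1_{2,i}$, $p^1_{i,i}$, $p^i_{2,2}$ for the relevant small $i$) shows that short circuits through arcs of different ``types'' collapse the girth of $\Gamma$ too far, or else force $\partial_\Gamma(y,x)$ values incompatible with membership of $(y,x)$ in the prescribed relation. Concretely, if $R_i, R_j, R_\ell \subseteq A\Gamma$ with $i<j<\ell \le g-1$, then using Lemma \ref{jcs}(i) to transfer intersection numbers and Leonard--Nomura's bound $g \le 8$ (from \cite[Theorem 3.3]{ln}, available once $k_1 > 1$), one checks case-by-case on $g \in \{3,4,5,6,7,8\}$ that the three indices cannot simultaneously satisfy the arithmetic forced by $\partial_\Gamma$ together with stability of $(X,R_1)$; in particular the valency inequality $k_1 > k_g + 1$ is exactly what rules out the degenerate directed-cycle-like configurations where many relations could coexist in $A\Gamma$.

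I expect the main obstacle to be the bookkeeping in the case $g = 8$ (and to a lesser extent $g = 6$), where there is enough room for three candidate indices and one must rule out each pattern by producing the offending short circuit or the contradictory two-way distance, rather than by a single clean inequality. The cleanest route is probably: fix the two extreme indices $i_{\min}, i_{\max} \in S$, show $\partial_\Gamma$ restricted to arcs of $\Gamma$ forces $i_{\min}$ and $i_{\max}$ to be ``adjacent'' in the sense that no third relation can lie strictly between them in the relevant combinatorial ordering, and then separately exclude a third index lying outside $[i_{\min}, i_{\max}]$ by a valency count built from Lemma \ref{jcs}(i) and the monotonicity Lemma \ref{ai}. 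Once all of $g \in \{3,\dots,8\}$ are dispatched this way, $|S| \le 2$ follows.
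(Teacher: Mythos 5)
Your outline identifies the right ingredients (the bound $g\le 8$ from Leonard--Nomura via Theorem \ref{drdg}, Fact \ref{sub}, and the nonvanishing intersection numbers of Lemmas \ref{p122}, \ref{p1ij}, \ref{p322} transferred by Lemma \ref{jcs}(i)), and the intended contradiction --- forcing two-way distances incompatible with the prescribed relations --- is indeed the mechanism the paper uses. But as written this is a plan, not a proof: the entire substantive content of the lemma is the case-by-case analysis that you explicitly defer (``one checks case-by-case \dots'', ``I expect the main obstacle to be the bookkeeping in the case $g=8$''). In the paper that bookkeeping \emph{is} the proof: for each admissible configuration of three relations inside $A\Gamma$ one pins down $\wz{\partial}_\Gamma$ on specific relations using concrete products such as $p^{5}_{2,3}\neq0$, $p^{4}_{2,2}\neq0$, $p^{6}_{3,3}\neq0$, $p^{1}_{2,2}\neq0$, and derives a contradiction of the form ``two distinct relations both equal $\Gamma_{(1,2)}$'' or ``a non-symmetric relation equals the symmetric class $\Gamma_{(2,2)}$''. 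None of these deductions appear in your proposal, so the claim is not established.

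Two further points. First, you omit the structural observation that actually launches the argument: since the attached scheme of $\Gamma$ is $\mathfrak{X}$, each class $\Gamma_{\wz h}$ is a single relation $R_j$; hence a non-symmetric $R_i$ and its transpose $R_{g-i}$ cannot both lie in $A\Gamma$ (otherwise $R_i\subseteq\Gamma_{(1,1)}$, a symmetric class). Combined with Fact \ref{zhuanzhi} this immediately kills $g\le 5$ and reduces to $g\in\{6,7,8\}$ with a short list of patterns, rather than your unstructured sweep over $g\in\{3,\dots,8\}$. Second, your proposed ``cleanest route'' (forcing $1\in S$ or treating the minimal index as distinguished, showing the extreme indices must be ``adjacent'', and excluding a third index by a valency count built on Lemma \ref{ai}) does not match how the obstruction actually arises: the paper has configurations with $R_1\cap A\Gamma=\emptyset$ and $R_2\subseteq A\Gamma$, and the contradictions come from coincidences of classes, not from valency monotonicity; so that route is speculative and you would still have to produce the missing case analysis to make it work.
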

\begin{proof}
Suppose for the contrary that the number of $R_i$ satisfying $R_i\subseteq A\Gamma$ with $1\leq i\leq g-1$ is at least three. According to  \cite[Theorem 3.3]{ln} and Theorem \ref{drdg}, we have $g\leq8$. Note that $\mathfrak{X}$ is the attached scheme of $\Gamma$. If $(R_i\cup R_{g-i})\cap A\Gamma\neq\emptyset$ for some $i\in\{1,2,\ldots,g-1\}$ with $i\neq g/2$, by Fact \ref{sub}, then $R_i\subseteq A\Gamma$ and $R_{g-i}\cap A\Gamma=\emptyset$, or $R_{g-i}\subseteq A\Gamma$ and $R_{i}\cap A\Gamma=\emptyset$ since $R_i$ is non-symmetric. It follows that $g\in\{6,7,8\}$. Pick $(x_i,y_i)\in R_i$ for $1\leq i\leq3$.

\textbf{Case 1.} $g=7$.

It is obviously that $(R_2\cup R_{5})\cap A\Gamma\neq\emptyset$. According to Facts \ref{sub} and \ref{zhuanzhi}, we may assume $R_2\subseteq A\Gamma$. Hence, $\partial_{\Gamma}(x_2,y_2)=1$. Note that $R_3\subseteq A\Gamma$ or $R_4\subseteq A\Gamma$. Suppose $R_3\subseteq A\Gamma$. Then $\partial_{\Gamma}(x_3,y_3)=1$. By $p^{4}_{2,2}\neq0$ and $p^{5}_{2,3}\neq0$, one has $\partial_{\Gamma}(y_3,x_3)\leq2$ and $\partial_{\Gamma}(y_2,x_2)\leq2$. Since $R_3$ and $R_2$ are non-symmetric, we have $R_3=\Gamma_{(1,2)}$ and $R_2=\Gamma_{(1,2)}$, a contradiction. Therefore, $R_{4}\subseteq A\Gamma$ and $\partial_{\Gamma}(y_3,x_3)=1$.

According to Lemma \ref{p322}, one gets $p^{3}_{2,2}\neq0$, which implies $\partial_{\Gamma}(x_3,y_3)\leq2$, and so $R_{4}=\Gamma_{(1,2)}$ since $R_{4}$ is non-symmetric. Note that $R_1\subseteq A\Gamma$ or $R_6\subseteq A\Gamma$. If $R_{1}\subseteq A\Gamma$, by $p^{5}_{1,4}\neq0$, then $R_{2}=\Gamma_{(1,2)}$ since $R_{2}$ is non-symmetric, which is impossible; if $R_{6}\subseteq A\Gamma$, by $p^{1}_{2,2}\neq0$ from Lemma \ref{p1ij}, then $R_{6}=\Gamma_{(1,2)}$ since $R_{6}$ is non-symmetric, a contradiction.

\textbf{Case 2.} $g=6$ or $8$.

\textbf{Case 2.1.} $(R_2\cup R_{g-2})\cap A\Gamma=\emptyset$.

Note that $g=8$ and $R_4\subseteq A\Gamma$. It is obvious that $(R_3\cup R_5)\cap A\Gamma\neq\emptyset$. By Facts \ref{sub} and \ref{zhuanzhi}, we may assume $R_3\subseteq A\Gamma.$ Note that $R_1\subseteq A\Gamma$ or $R_7\subseteq A\Gamma$. If $R_1\subseteq A\Gamma$, by $p^{5}_{1,4}\neq0$ and $p^{7}_{3,4}\neq0$, then $(x_3,y_3)$, $(x_1,y_1)\in\Gamma_{(1,2)}$, which implies $R_3=R_1=\Gamma_{(1,2)}$, a contradiction; if $R_7\subseteq A\Gamma$, by $p^{6}_{3,3}\neq0$ and $p^{2}_{3,7}=p^{6}_{1,5}\neq0$ from Lemma \ref{jcs} (i), then $R_2=\Gamma_{(2,2)}$, contrary to the fact that $R_2$ is non-symmetric.

\textbf{Case 2.2.} $(R_2\cup R_{g-2})\cap A\Gamma\neq\emptyset$.

By Facts \ref{sub} and \ref{zhuanzhi}, we may assume $R_2\subseteq A\Gamma$. Hence, $\partial_{\Gamma}(x_2,y_2)=1$.

Suppose $R_{g/2}\nsubseteq A\Gamma$. It is obvious that $g=8$ and $(R_3\cup R_{5})\cap A\Gamma\neq\emptyset$. If $R_3\subseteq A\Gamma$, by $p^{5}_{2,3}\neq0$ and $p^{6}_{3,3}\neq0$, then $(x_3,y_3)$, $(x_2,y_2)\in\Gamma_{(1,2)}$, which implies $R_3=R_2=\Gamma_{(1,2)}$, a contradiction. Thus, $R_{5}\subseteq A\Gamma$, and so $\partial_{\Gamma}(y_3,x_3)=1$.

According to Lemma \ref{p322}, one gets $p^{3}_{2,2}\neq0$, which implies $\partial_{\Gamma}(x_3,y_3)=2$. It follows that $R_{5}=\Gamma_{(1,2)}$. Note that $R_1\subseteq A\Gamma$ or $R_7\subseteq A\Gamma$. If $R_{1}\subseteq A\Gamma$, then $R_{2}=\Gamma_{(1,2)}$ since $p^{6}_{1,5}\neq0$, which is impossible; if $R_{7}\subseteq A\Gamma$, by $p^{1}_{2,2}\neq0$ from Lemma \ref{p1ij}, then $R_{7}=\Gamma_{(1,2)}$, a contradiction.

Suppose $R_{g/2}\subseteq A\Gamma$. Note that $g\in\{6,8\}$ and $R_{g/2}=\Gamma_{(1,1)}$. Since $p^{g-2}_{2,g-4}\neq0$, we have $R_2=\Gamma_{(1,2)}$. If $R_{g-1}\subseteq A\Gamma$, by $p^{1}_{2,2}\neq0$ from Lemma \ref{p1ij}, then $R_{g-1}=\Gamma_{(1,2)}$, which is impossible. Therefore, $R_{g-1}\nsubseteq A\Gamma$.

If $g=6$, then $R_1\subseteq A\Gamma$, which implies $R_1=\Gamma_{(1,2)}$ since $p_{2,3}^5\neq0$, a contradiction. Thus, $g=8$. If $R_3\subseteq A\Gamma$ or $R_5\subseteq A\Gamma$, by $p^{5}_{2,3}\neq0$ and $p^{3}_{2,2}\neq0$ from Lemma \ref{p322}, then $(y_3,x_3)$ or $(x_3,y_3)\in\Gamma_{(1,2)}$, which implies $R_3=\Gamma_{(1,2)}$ or $R_5=\Gamma_{(1,2)}$, contrary to the fact that $R_2=\Gamma_{(1,2)}$. Hence, $R_1\subseteq A\Gamma$. Since $p^{3}_{1,2}\neq0$ and $p^{5}_{1,4}\neq0,$ one gets $R_3=\Gamma_{(2,2)}$, contrary to the fact that $R_3$ is non-symmetric.
\end{proof}


\begin{lemma}\label{girth}
Suppose $k_1>k_g+1.$ If $\Gamma$ is not one of the digraphs in Theorem \ref{main} (i) and (iv), then $g$ is even.
\end{lemma}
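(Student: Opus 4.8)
The plan is to argue by contradiction: assume $g$ is odd while $\Gamma$ is none of the digraphs in Theorem \ref{main} (i) and (iv), and derive a contradiction. First I would invoke \cite[Theorem 3.3]{ln} together with Theorem \ref{drdg} to reduce to the short type $d=g-1$ and to the small-girth range $g\in\{3,5,7\}$ (the even girths are excluded by hypothesis). The case $g=3$ forces $d=2$ and the only relations available inside $A\Gamma$ are $R_1$ and $R_2=R_{g-1}$, both of which are handled directly: by Fact \ref{sub} every $R_i$ is either contained in $A\Gamma$ or disjoint from it, and by Fact \ref{zhuanzhi} replacing $A\Gamma$ by $(A\Gamma)^{\rm T}$ swaps the roles of $R_1$ and $R_{g-1}$, so up to this symmetry $R_1\subseteq A\Gamma$; then Lemma \ref{arc} caps the number of $R_i\subseteq A\Gamma$ with $1\le i\le g-1$ at two, which (together with the possibility $R_g\subseteq A\Gamma$, but here $g=d+1>d$ so there is no $R_g$ in the short case) pins $\Gamma$ down to $(X,R_1)$ or $(X,R_1\cup R_2)=(X,R_1\cup R_{g-1})$ — the former being case (i) and the latter being excluded because $2\mid g$ fails, forcing a contradiction via the structural analysis.

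The substantive work is the cases $g=5$ and $g=7$. Here $d=g-1\in\{4,6\}$ and, by Lemma \ref{arc}, at most two of $R_1,\ldots,R_{g-1}$ lie in $A\Gamma$; since $\Gamma$ is not undirected and strongly connected, at least one does, and by the transpose symmetry (Fact \ref{zhuanzhi}) I may assume the ``first'' one is among $R_1,R_2,\ldots,R_{\lfloor g/2\rfloor}$. If exactly one $R_i\subseteq A\Gamma$, then $\Gamma=(X,R_i)$ with $i\in\{1,2,\dots,\lfloor g/2\rfloor\}$; the case $i=1$ is (i), so $\Gamma=(X,R_i)$ with $2\le i\le\lfloor g/2\rfloor$, and I would rule these out by showing such a digraph fails to be weakly distance-regular — concretely, by exhibiting an odd-girth obstruction: since $g$ is odd, the unique symmetric relation is $R_{g/2}$... wait, $g$ odd means $(g\pm1)/2$ are the ``middle'' indices and $i^{*}=g-i$ for $1\le i\le g-1$, so no $R_i$ with $1\le i\le g-1$ is symmetric, and I would use Lemma \ref{p1ij} and Lemma \ref{p322} (valid since $g\le 7$) to show that $(X,R_i)$ has a two-way distance pair $(a,a)$ forcing a symmetric relation where none exists, a contradiction. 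If exactly two of the $R_i$ lie in $A\Gamma$, a careful bookkeeping of which pairs $\{R_i,R_{g-i}\}$ they come from — exactly as in the proof of Lemma \ref{arc}, using the nonzero intersection numbers supplied by Lemmas \ref{p122}, \ref{p1ij}, \ref{p322} — forces two distinct relations to collapse to the same $\Gamma_{(1,2)}$ or $\Gamma_{(2,2)}$, again contradicting that the $R_i$ are distinct and non-symmetric.

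The main obstacle I anticipate is the $g=7$ subcase with two arc-relations: there are several configurations for the unordered pair of chosen indices (e.g.\ $\{1,2\}$, $\{1,3\}$, $\{2,3\}$ up to transpose), and for each one must track the two-way distances $\wz\partial_\Gamma(x_i,y_i)$ of representatives and show a forbidden coincidence. This is essentially a compressed rerun of Case 1 of Lemma \ref{arc} but now aiming at the sharper conclusion that $\Gamma$ must be $(X,R_1)$ or $(X,R_1\cup R_{g-1})$, the latter being impossible for odd $g$; I would organize it by first nailing down $R_{g/2}$-type middle relations (which do not exist for odd $g$, simplifying matters) and then propagating constraints outward from the representative in $R_2$, whose distance-$1$ status and the relation $p^{g-2}_{2,g-4}\neq 0$ give the most leverage. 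Everything else is routine once the reduction $g\in\{5,7\}$ and the two-relation dichotomy are in place.
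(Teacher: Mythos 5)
Your overall plan (argue by contradiction, reduce the girth via \cite[Theorem 3.3]{ln} and Theorem \ref{drdg}, use Facts \ref{sub} and \ref{zhuanzhi} to normalize which relations lie in $A\Gamma$, and then force two distinct relations of $\mathfrak{X}$ to coincide with a single two-way distance class via the nonvanishing intersection numbers of Lemmas \ref{p122}, \ref{p1ij}, \ref{p322}) is the same strategy the paper uses, but as written it has two genuine gaps. First, the opening ``reduction to the short type $d=g-1$'' is not justified: the cited results only give $g\leq 8$ and the short/long dichotomy, not permission to discard the long type. The scheme attached to $\Gamma$ may have $d=g$, in which case $R_g$ exists and may be contained in $A\Gamma$, a possibility you explicitly set aside (``there is no $R_g$ in the short case''). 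The paper never makes this reduction; its argument is simply insensitive to $R_g$, because a pair in $R_j$ with $1\leq j\leq g-1$ has $\partial_{\Gamma}=1$ only if $R_j\subseteq A\Gamma$ (Fact \ref{sub}), so extra $R_g$-arcs cannot upset the forced coincidences. You would need either this observation or an actual quotient argument in the spirit of \cite[Theorem 2.1]{AM91}; as stated the step is a hole, though a repairable one.

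Second, and more seriously, the heart of the lemma --- deriving the contradiction for $g=5$ and $g=7$ --- is asserted rather than proved. Statements such as ``a careful bookkeeping \dots forces two distinct relations to collapse to the same $\Gamma_{(1,2)}$ or $\Gamma_{(2,2)}$'' and ``everything else is routine'' are exactly the content that must be supplied. For $g=5$ the paper pins the arc relation $R_i$ ($i\in\{2,3\}$) to $\Gamma_{(1,2)}$ using $p^{2}_{3,3}=p^{3}_{2,2}\neq0$, and then uses $p^{4}_{3,3}=p^{1}_{2,2}\neq0$ and $p^{4}_{2,2}=p^{1}_{3,3}\neq0$ to force $R_1$ or $R_4$ into the same class; for $g=7$ it further splits on whether $(R_2\cup R_5)\cap A\Gamma=\emptyset$ and needs specific identities such as $k_5p^{5}_{2,4}=k_3p^{3}_{2,2}$, $p^{2}_{3,6}=p^{5}_{1,4}$ and $p^{5}_{6,6}=p^{2}_{1,1}$, together with $p^{4}_{1,3}\neq0$ and $p^{6}_{3,3}\neq0$. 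None of these computations (or equivalents under your ``one versus two arc-relations'' organisation) appear in the proposal, and they are not automatic consequences of Lemma \ref{arc}; indeed the paper's proof of this lemma does not invoke Lemma \ref{arc} at all, but reruns the distance-forcing analysis from scratch. So you have identified the right tools, but the proposal does not yet constitute a proof of the statement.
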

\begin{proof}
Suppose for the contrary that $g$ is odd. According to  \cite[Theorem 3.3]{ln} and Theorem \ref{drdg}, we have $g\leq8$. It follows that $g=3$, $5$ or $7$. If $g=3$, then $\Gamma$ is one of the digraphs in Theorem \ref{main} (i) or (iv), a contradiction.
Thus, $g=5$ or $7$.

\textbf{Case 1.} $g=5.$

Since $\Gamma$ is strongly connected and not one of the digraphs in Theorem \ref{main} (i) or (iv), by Fact \ref{sub}, there exists $i\in\{2,3\}$ such that $R_i\subseteq A\Gamma$. Let $(x,y)\in R_i$. According to Lemma \ref{jcs} (i) and Lemma \ref{p322}, one gets $p_{3,3}^{2}=p^{3}_{2,2}\neq0$, and so $(x,y)\in \Gamma_{(1,2)}$, which imply $R_i=\Gamma_{(1,2)}$.

Let $(u,v)\in R_{1}$. By Lemma \ref{jcs} (i) and Lemma \ref{p1ij}, we have $p^{4}_{3,3}=p_{2,2}^1\neq0$ and $p^{4}_{2,2}=p^{1}_{3,3}\neq0$, which imply $\partial_{\Gamma}(v,u)\leq2$ and $\partial_{\Gamma}(u,v)\leq2$. Since $R_1$ is non-symmetric, we obtain $(u,v)\in\Gamma_{(1,2)}\cup\Gamma_{(2,1)}$, contrary to the fact that $R_i=\Gamma_{(1,2)}$.

\textbf{Case 2.} $g=7.$

Since $\Gamma$ is strongly connected and not one of the digraphs in Theorem \ref{main} (i) or (iv), by Fact \ref{sub}, there exists $i\in\{2,3,4,5\}$ such that $R_i\subseteq A\Gamma$.

Suppose $(R_2\cup R_5)\cap A\Gamma\neq\emptyset$. According to Fact  \ref{zhuanzhi}, we may assume $R_2\subseteq A\Gamma.$ Let $(u,v)\in R_{3}$. By Lemma \ref{p322}, one gets $p_{2,2}^3\neq0$, and so $\partial_{\Gamma}(u,v)\leq2$. Since $p_{2,2}^4\neq0$, one has $\partial_{\Gamma}(v,u)\leq2$. The fact that $R_3$ is non-symmetric implies $(u,v)\in\Gamma_{(1,2)}\cup\Gamma_{(2,1)}$. It follows that $R_3=\Gamma_{(1,2)}$ or $R_4=\Gamma_{(1,2)}$. Since $p^{5}_{2,3}\neq0$ and $k_5p^{5}_{2,4}=k_{3}p^{3}_{2,2}\neq0$ from Lemma \ref{jcs} (i), one has $R_2=\Gamma_{(1,2)}$, a contradiction.

Suppose $(R_2\cup R_5)\cap A\Gamma=\emptyset$. It is obvious that $(R_3\cup R_4)\cap A\Gamma\neq\emptyset$. By Fact   \ref{zhuanzhi}, we may assume $R_3\subseteq A\Gamma.$ Let $(u,v)\in R_{1}$. Since $p^{6}_{3,3}\neq0$ and $p^{1}_{3,3}\neq0$ from Lemma \ref{p1ij}, we get $\partial_{\Gamma}(v,u)\leq2$ and $\partial_{\Gamma}(u,v)\leq2$. Since $R_1$ is non-symmetric, one obtains $(u,v)\in\Gamma_{(1,2)}\cup\Gamma_{(2,1)}$, which implies $R_1=\Gamma_{(1,2)}$ or $R_6=\Gamma_{(1,2)}$. If $R_1=\Gamma_{(1,2)}$, by $p^{4}_{1,3}\neq0$, then $R_3=\Gamma_{(1,2)}$, a contradiction. Then $R_6=\Gamma_{(1,2)}$. Since $p_{3,6}^2=p_{1,4}^5\neq0$ and $p_{6,6}^5=p_{1,1}^2\neq0$ from Lemma \ref{jcs} (i), we have $R_2=\Gamma_{(2,2)}$, contrary to the fact that $R_2$ is non-symmetric.
\end{proof}

\begin{lemma}\label{R3R4}
Suppose $k_1>k_g+1$. If $g\in\{6,8\}$, then $(R_3\cup R_{g-3}\cup R_{g/2})\cap A\Gamma=\emptyset$.
\end{lemma}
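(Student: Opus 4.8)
The plan is to argue by contradiction along the same lines as Lemma \ref{arc} and Lemma \ref{girth}: assume some $R_i$ with $i\in\{3,g-3,g/2\}$ lies in $A\Gamma$, and in each case pin down the two-way distance $\widetilde{\partial}_\Gamma$ on a few short relations well enough to force either a symmetric $R_j$ to play the role $\Gamma_{(1,2)}$ (impossible), or two distinct $R_j$'s to coincide with $\Gamma_{(1,2)}$ (also impossible since the $R_j$ partition $X\times X$). The invocations of Fact \ref{zhuanzhi} let us pass to the transpose digraph, so that when, say, $R_i$ and $R_{g-i}$ are interchanged we may always assume the one in $A\Gamma$ has the smaller index; this cuts the case analysis roughly in half. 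Throughout, the stability and commutativity of the underlying distance-regular digraph $(X,R_1)$ (Theorem \ref{drdg}) are in force, and the nonvanishing of various intersection numbers is supplied by Lemmas \ref{p122}, \ref{pi}, \ref{ai}, \ref{p1ij}, \ref{p322} together with Lemma \ref{jcs} (i) to move valency factors around.

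First I would treat $R_{g/2}\subseteq A\Gamma$. Here $R_{g/2}$ is the unique symmetric relation when $g/2$ is odd, and in any case the candidate equalities will be of the form $R_{g/2}=\Gamma_{(1,1)}$ forced by stability (since $(x,y)\in R_{g/2}\subseteq A\Gamma$ gives $\partial_\Gamma(x,y)=1$, and one shows $\partial_\Gamma(y,x)=1$ using an appropriate $p^{g/2}_{1,g/2-1}\ne 0$ or $p^{g/2}_{2,2}\ne 0$ from Lemma \ref{p1ij}/\ref{p322}); but $g/2\ge 3$ so a girth-$1$ relation contradicts $g\ge 6$. For $g=6$ this is immediate; for $g=8$ one needs the extra inequality $p^{4}_{1,3}\ne 0$ (or $p^4_{2,2}\ne 0$) to realize a $2$-path closing a $4$-circuit, again contradicting $g=8$. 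Next I would handle $R_3\subseteq A\Gamma$ (and, via Fact \ref{zhuanzhi}, $R_{g-3}\subseteq A\Gamma$ reduces to this for $g=6$; for $g=8$ the indices $3$ and $5$ are genuinely different and must be looked at separately, but the transpose trick still lets us assume $R_3\subseteq A\Gamma$). From $(x_3,y_3)\in R_3\subseteq A\Gamma$ we get $\partial_\Gamma(x_3,y_3)=1$; Lemma \ref{p322} gives $p^{3}_{2,2}\ne 0$, so via a $z$ with $(x_3,z),(z,y_3)\in R_2$ we obtain a returning path and $\partial_\Gamma(y_3,x_3)\le 2$, whence $R_3=\Gamma_{(1,2)}$. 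Then I would propagate: for $g=6$, $p^{5}_{2,3}\ne 0$ and $p^{5}_{1,4}\ne 0$ (the latter via $p^1_{2,4}\ne0$ and Lemma \ref{jcs}(i)) force $R_2$ or $R_1$ to equal $\Gamma_{(1,2)}$ as well, a contradiction; for $g=8$, the chain $p^{6}_{3,3}\ne 0$, $p^{5}_{2,3}\ne 0$, $p^{7}_{3,4}\ne 0$ (all obtainable from the earlier lemmas plus Lemma \ref{jcs}(i)) forces $R_6$, $R_5$, or $R_1$ into $\Gamma_{(1,2)}$ or $\Gamma_{(2,2)}$, contradicting either non-symmetry or the already-established $R_3=\Gamma_{(1,2)}$.

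The main obstacle I anticipate is the $g=8$ sub-case of $R_5\subseteq A\Gamma$ (equivalently $R_{g-3}\subseteq A\Gamma$): unlike $R_3$, the relation $R_5$ is "far" from $R_1,R_2$, so one cannot directly close a short circuit, and one must instead chase intersection numbers like $p^{2}_{3,7}=p^{6}_{1,5}\ne 0$ and $p^{5}_{6,6}=p^{2}_{1,1}\ne 0$ (Lemma \ref{jcs}(i)) to conclude that some $R_j$ with $j\notin\{1,\dots\}$ must equal $\Gamma_{(2,2)}$, violating non-symmetry — mirroring the delicate final paragraph of Lemma \ref{arc}. A secondary nuisance is bookkeeping: one must consistently remember which relations are known to be arcs of $\Gamma$, which are known to have empty intersection with $A\Gamma$ (Fact \ref{sub}), and which two-way distances have been fixed, so that a claimed contradiction is genuinely a contradiction. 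I expect the whole argument to fit comfortably once the $R_3$ case is done, since $R_{g/2}$ and (for $g=6$) $R_{g-3}=R_3$ are either trivial or reduce to it, and only the lone $g=8$, $R_5$ case requires the full Lemma \ref{arc}-style manipulation.
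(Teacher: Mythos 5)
Your proposal breaks down at the very point the paper's proof is organized around: $g$ is the girth of $(X,R_1)$, not of $\Gamma$. If $R_{g/2}\subseteq A\Gamma$ then indeed $R_{g/2}=\Gamma_{(1,1)}$ (trivially, since $R_{g/2}$ is symmetric; no intersection numbers are needed), but this is \emph{not} a contradiction: $\Gamma$ is only assumed simple, strongly connected and not undirected, and weakly distance-regular digraphs with $(1,1)\in\tilde{\partial}(\Gamma)$ do occur -- the digraphs in Theorem \ref{main} (iv)--(vi) contain $R_g=\Gamma_{(1,1)}$ when $d=g$. So ``a girth-$1$ relation contradicts $g\ge 6$'' is false, and your claim that the $R_{g/2}$ case is immediate (for $g=6$) or follows from closing a short circuit (for $g=8$) collapses. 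The paper instead observes that, since $\Gamma$ is not undirected, some non-symmetric $R_i\subseteq A\Gamma$ with, after applying Fact \ref{zhuanzhi}, $i<d/2$; combined with $R_{g/2}\subseteq A\Gamma$ and Lemma \ref{arc} this forces exactly one of $R_1,R_2$ into $A\Gamma$, and in each subcase a two-way-distance computation shows a non-symmetric relation equals the symmetric set $\Gamma_{(2,2)}$ ($g=6$, and the analogous $g=8$ subcase ends with two distinct relations both equal to $\Gamma_{(2,3)}$). That is Cases 1 and 2.2 of the paper's proof and is a substantial argument, not a girth observation.

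The $R_3$ branch has a second genuine gap: the step ``$p^{3}_{2,2}\neq 0$, so $\partial_\Gamma(y_3,x_3)\le 2$, whence $R_3=\Gamma_{(1,2)}$'' is unjustified. That intersection number produces $z$ with $(x_3,z),(z,y_3)\in R_2$, which could only bound $\partial_\Gamma(x_3,y_3)$ (already $1$), and only if $R_2\subseteq A\Gamma$ -- which you do not know, and which Lemma \ref{arc} makes a case to be analyzed rather than an assumption; bounding $\partial_\Gamma(y_3,x_3)$ requires numbers $p^{g-3}_{j,k}$ with $R_j,R_k$ already known to lie in $A\Gamma$. (For $g=6$ the claim is impossible anyway, since there $R_3$ is symmetric and equals $\Gamma_{(1,1)}$, not $\Gamma_{(1,2)}$.) In the paper's $g=8$ treatment of $R_3\subseteq A\Gamma$ no equality $R_3=\Gamma_{(1,2)}$ is ever established; the contradiction comes from a longer chain: excluding $R_6\subseteq A\Gamma$ (which would give $R_1=\Gamma_{(2,2)}$), hence $R_6\subseteq\Gamma_2$, then $R_2=\Gamma_{(3,2)}$, then $R_1\subseteq\Gamma_2$, and finally $R_1=\Gamma_{(2,3)}=R_6$, two relations coinciding. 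Finally, the case you flag as the main obstacle ($g=8$, $R_5\subseteq A\Gamma$) does not exist as a separate case -- as you yourself note, Fact \ref{zhuanzhi} reduces it to $R_3\subseteq A\Gamma$ -- whereas the genuinely hard cases are exactly the two you label easy. As written, both the $R_{g/2}$ and the $R_3$ branches of your argument are incomplete.
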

\begin{proof}
Suppose not. By Facts \ref{sub} and \ref{zhuanzhi}, we may assume $R_3\subseteq A\Gamma$ or $R_{g/2}\subseteq A\Gamma$. Without loss of generality, we may assume $R_i\subseteq A\Gamma $ for some $i<d/2$. Let $(x_i,y_i)\in R_i$ for $1\leq i\leq 3$.

\textbf{Case 1.} $g=6$.

Note that $R_3\subseteq A\Gamma$. Then $R_3=\Gamma_{(1,1)}$. Since $R_i\subseteq A\Gamma$ for some $i<d/2$, from Lemma \ref{arc} and Fact \ref{sub}, one has $R_1\subseteq A\Gamma$ and $R_2\cap A\Gamma=\emptyset$, or $R_1\cap A\Gamma=\emptyset$ and $R_2\subseteq A\Gamma$. If $R_1\subseteq A\Gamma$ and $R_2\cap A\Gamma=\emptyset$, by $p_{1,1}^2p_{1,3}^4\neq0$, then $R_2=\Gamma_{(2,2)}$, contrary to the fact that $R_2$ is non-symmetric; if $R_1\cap A\Gamma=\emptyset$ and $R_2\subseteq A\Gamma$, by $p_{2,3}^5p_{2,2}^1\neq0$ from Lemma \ref{p1ij}, then $R_1=\Gamma_{(2,2)}$, a contradiction.

\textbf{Case 2.} $g=8$.

We divide it into two subcases according to whether $R_3$ is a subset of $A\Gamma$.

\textbf{Case 2.1.} $R_3\subseteq A\Gamma$.

Since $p^{6}_{3,3}\neq0$, one has $\partial_{\Gamma}(y_2,x_2)\leq2$. If $R_6\subseteq A\Gamma$, by Lemma \ref{jcs} (i) and Lemma \ref{p1ij}, then $p^{7}_{6,6}=p^{1}_{2,2}\neq0$ and $p^{1}_{3,6}=p^{7}_{2,5}\neq0$, which imply $R_1=\Gamma_{(2,2)}$ from Lemma \ref{arc}, a contradiction. Hence, $\partial_{\Gamma}(y_2,x_2)=2$, and so $R_6\subseteq \Gamma_2$.

Since $R_2$ is non-symmetric, we get $\partial_{\Gamma}(x_2,y_2)=1$ or $\partial_{\Gamma}(x_2,y_2)>2$. If $\partial_{\Gamma}(x_2,y_2)=1$, then $R_2=\Gamma_{(1,2)}$, and so $R_3=\Gamma_{(1,2)}$ since $p^{5}_{2,3}\neq0$, a contradiction. Hence, $\partial_{\Gamma}(x_2,y_2)>2$. By Lemma \ref{jcs} (i) and Lemma \ref{p322}, one has $k_2p^{2}_{3,6}=k_3p^{3}_{2,2}\neq0$. Since $R_6\subseteq\Gamma_2$, we get $\partial_{\Gamma}(x_2,y_2)=3$, and so $R_2=\Gamma_{(3,2)}$.

By Lemma \ref{p1ij}, we obtain $p^{1}_{3,3}\neq0$, and so $\partial_{\Gamma}(x_1,y_1)\leq2$. If $\partial_{\Gamma}(x_1,y_1)=1$, then $R_1\subseteq A\Gamma$, which implies $\partial_{\Gamma}(x_2,y_2)=2$ since $p^{2}_{1,1}\neq0$, contrary to the fact that $\partial_{\Gamma}(x_2,y_2)=3$. Thus, $\partial_{\Gamma}(x_1,y_1)=2$, and so $R_1\subseteq\Gamma_2$.

Since $R_1$ is non-symmetric, we have $\partial_{\Gamma}(y_1,x_1)=1$ or $\partial_{\Gamma}(y_1,x_1)>2$. Lemma \ref{jcs} (i) implies $p^{2}_{3,7}=p^{6}_{1,5}\neq0$. If $\partial_{\Gamma}(y_1,x_1)=1$, then $R_7\subseteq A\Gamma$, which implies $\partial_{\Gamma}(x_2,y_2)=2$, a contradiction. Therefore, $\partial_{\Gamma}(y_1,x_1)>2$. By Lemma \ref{jcs} (i) and Lemma \ref{p1ij}, one has $p^{7}_{3,6}=p^{1}_{2,5}\neq0$. Since $R_6=\Gamma_{(2,3)}$, we obtain $\partial_{\Gamma}(y_1,x_1)=3$, and so $R_1=\Gamma_{(2,3)}$, a contradiction.

\textbf{Case 2.2.} $R_3\nsubseteq A\Gamma$.

According to Fact \ref{sub}, we have $R_3\cap A\Gamma=\emptyset$. It follows that $R_4\subseteq A\Gamma$, and so $R_4=\Gamma_{(1,1)}$. Since $R_i\subseteq A\Gamma $ for some $i<d/2$, from Lemma \ref{arc} and Fact \ref{sub}, one has $R_1\subseteq A\Gamma$ and $R_2\cap A\Gamma=\emptyset$, or $R_1\cap A\Gamma=\emptyset$ and $R_2\subseteq A\Gamma$.

Suppose $R_1\subseteq A\Gamma$ and $R_2\cap A\Gamma=\emptyset$. Since $p^{2}_{1,1}p^{5}_{1,4}\neq0$, we have $\partial_{\Gamma}(x_2,y_2)=\partial_{\Gamma}(y_3,x_3)=2$ from Lemma \ref{arc}. It follows that $R_2\subseteq \Gamma_2$ and $R_5\subseteq \Gamma_2$. Since $R_2$ and $R_3$ are non-symmetric, one gets $\partial_{\Gamma}(y_2,x_2)>2$ and $\partial_{\Gamma}(x_3,y_3)>2$ from Lemma \ref{arc}. The fact that $p^{6}_{2,4}p^{3}_{1,2}\neq0$ implies $\partial_{\Gamma}(y_2,x_2)=\partial_{\Gamma}(x_3,y_3)=3$. Then $R_2=R_5=\Gamma_{(2,3)}$, a contradiction.

Suppose $R_1\cap A\Gamma=\emptyset$ and $R_2\subseteq A\Gamma$. In view of Lemma \ref{p322}, we obtain $p_{2,2}^1p_{2,2}^3\neq0$, and so $\partial_{\Gamma}(x_1,y_1)=\partial_{\Gamma}(x_3,y_3)=2$. It follows that $R_1\subseteq \Gamma_2$ and $R_3\subseteq \Gamma_2$. Since $R_1$ and $R_3$ are non-symmetric, one has $\partial_{\Gamma}(y_1,x_1)>2$ and $\partial_{\Gamma}(y_3,x_3)>2$ from Lemma \ref{arc}. The fact that $p^{7}_{3,4}p^{5}_{1,4}\neq0$ implies $\partial_{\Gamma}(y_1,x_1)=\partial_{\Gamma}(y_3,x_3)=3$, and so $R_1=R_3=\Gamma_{(2,3)}$, which is impossible.
\end{proof}

Now we are ready to prove Theorem \ref{main}.

\begin{proof}[Proof of Theorem \ref{main}]
In view of Lemma \ref{sur},  each digraph in Theorem \ref{main} (i)--(vi) is a weakly distance-regular digraph with $\mathfrak{X}$ as its attached scheme. Assume the contrary, namely, $\Gamma$ is not isomorphic to one of the digraphs in Theorem \ref{main} (i)--(vi).

First we consider the case $k_1=k_g+1$. Suppose $d=g-1$. Then $k_1=1$. Since $(X,R_1)$ is  distance-regular,  $(X,R_1)$ is a directed cycle, which implies $k_i=1$ for  $0\leq i\leq d$. It follows from \cite[Theorem 1.2]{HS04} that   $\Gamma$ is isomorphic to one of the digraphs in Theorem \ref{main} (i) and (iii), a contradiction. Suppose $d=g$. Then $(X,R_1)$ is a lexicographic product of a directed cycle $\Delta$ of length $g$ and  diameter $g-1$ by an empty graph. It follows that $\mathfrak{X}(\Delta)$ is a $P$-polynomial non-symmetric association scheme.  According to \cite[Theorem 2.1]{AM91}, $\Gamma$ is a lexicographic product of $\Gamma'$ by an empty graph or a lexicographic product of $\Gamma'$ by a complete graph, where $\Gamma'$ is a weakly distance-regular digraph with attached scheme $\mathfrak{X}(\Delta)$. By the case of $d=g-1$, $\Gamma'$ is isomorphic to one of the digraphs in Theorem \ref{main} (i) and (iii).  Therefore, $\Gamma$ is isomorphic to one of the digraphs in Theorem \ref{main} (i), (iii), (iv) and (vi), which is impossible.

Now, we  consider the case that $k_1\neq k_g+1$. In view of Theorem \ref{drdg}, one gets  $k_1>k_g+1$. By Facts \ref{sub} and  \ref{zhuanzhi}, we may assume $R_i\subseteq A\Gamma $ for some $i<d/2$. According to  \cite[Theorem 3.3]{ln} and Theorem \ref{drdg}, we have $g\leq8$. It follows from Lemma \ref{girth} that $g\in\{4,6,8\}$. If $g=4$, then $\Gamma$ is one of the digraphs in Theorem \ref{main} (i), (iii) and (iv) since $\mathfrak{X}$ is non-symmetric, a contradiction. Thus, $g=6$ or $8$.

\textbf{Case 1.} $g=6$.

Note that $R_i\subseteq A\Gamma$ for some $i<d/2$ and $\Gamma$ is not one of the digraphs in Theorem \ref{main} (i)--(vi). By Lemma \ref{R3R4} and Fact \ref{sub}, we have $R_1\cup R_4\subseteq A\Gamma$ or $R_2\cup R_5\subseteq A\Gamma$. If $R_1\cup R_4\subseteq A\Gamma$, then $R_1=R_4=\Gamma_{(1,2)}$ since $p^{5}_{1,4}p^{2}_{1,1}\neq0$; if $R_2\cup R_5\subseteq A\Gamma$, by $p^{4}_{2,2}\neq0$ and $p^{1}_{2,2}\neq0$ from Lemma \ref{p1ij}, then $R_2=R_5=\Gamma_{(1,2)}$, a contradiction.

\textbf{Case 2.} $g=8$.

Note that $R_i\subseteq A\Gamma$ for some $i<d/2$ and $\Gamma$ is not one of the digraphs in Theorem \ref{main} (i)--(vi). By Lemmas \ref{arc}, \ref{R3R4} and Fact \ref{sub}, we get $R_1\subseteq A\Gamma$ and $R_2\cap A\Gamma=\emptyset$, or $R_1\cap A\Gamma=\emptyset$ and $R_2\subseteq A\Gamma$.
If $R_1\subseteq A\Gamma$ and $R_2\cap A\Gamma=\emptyset$, from Lemma \ref{R3R4}, then $R_6\subseteq A\Gamma$, which implies $R_1=\Gamma_{(1,2)}$ and $R_6=\Gamma_{(1,2)}$ since $p_{1,6}^7p_{1,1}^2\neq0$, a contradiction. Hence, $R_1\cap A\Gamma=\emptyset$ and $R_2\subseteq A\Gamma$. By Lemma \ref{R3R4}, we have $R_7\subseteq A\Gamma$. In view of Lemma \ref{jcs} (i), one gets $p^{6}_{7,7}=p^{2}_{1,1}\neq0$, and so $R_2=\Gamma_{(1,2)}$. According to Lemma \ref{p1ij}, we have $p_{2,2}^1\neq0$, which implies $R_7=\Gamma_{(1,2)}$, a contradiction.
\end{proof}

\section*{Acknowledgements}

The authors are indebted to the anonymous reviewers for their useful comments and suggestions. We would like to thank Professor Hiroshi Suzuki for his suggestions. Y.~Yang is supported by NSFC (12101575) and the Fundamental Research Funds for the Central Universities (2652019319),  K. Wang is supported by the National Key R$\&$D Program of China (No.~2020YFA0712900) and NSFC (12071039, 12131011).

\end{CJK*}

\end{document}